\theoremstyle{plain}
\newtheorem{theorem}{Theorem}[section]
\newtheorem{lemma}[theorem]{Lemma}
\newtheorem{corollary}[theorem]{Corollary}
\newtheorem{proposition}[theorem]{Proposition}
\theoremstyle{definition}
\newtheorem{definition}[theorem]{Definition}
\theoremstyle{remark}
\newtheorem{remark}{Remark}
\numberwithin{equation}{section}
\newcommand{\NN}{\mathbb{N}}
\newcommand{\R}{\mathbb{R}}
\newcommand{\norm}[1]{\|{#1}\|}
\newcommand{\abs}[1]{|{#1}|}
\newcommand{\CI}{\mathrm{CI}}
\newcommand{\scp}[2]{\langle {#1},{#2}\rangle}
\newcommand{\eps}{\varepsilon}
\DeclareMathOperator*{\argmin}{argmin}
\DeclareMathOperator{\maj}{maj}
\DeclareMathOperator{\diag}{diag}
\author[D. Ghilli]{Daria Ghilli$^\S$}
\address{$^\S$ Daria Ghilli: LUISS Università di Roma, Dipartimento di Economia e Finanza, Viale Romania 32, Roma, ITALIA}
\email{dghilli@luiss.it}
\author[D. A. Lorenz]{Dirk A. Lorenz$^*$}
\address{$^*$ Dirk A. Lorenz: 
Institute of Analysis and Algebra, TU Braunschweig, Germany}
\email{d.lorenz@tu-bs.de}
\author[E. Resmerita]{Elena Resmerita$^{\#}$}
\address{$^{\#}$ Elena Resmerita:
Institute of Mathematics,
		University of Klagenfurt, Austria}
\email{elena.resmerita@aau.at}
\title[Nonconvex flexible sparsity regularization: Theory and numerical schemes]{Nonconvex flexible sparsity regularization: Theory and mononotone numerical schemes}
\begin{document}


\begin{abstract}
     Flexible sparsity regularization means stably approximating sparse solutions of operator equations by using coefficient-dependent penalizations. We propose and analyse a general nonconvex  approach in this respect, from both  theoretical and  numerical perspectives. Namely, we show convergence of the regularization method and establish convergence properties of a couple of majorization approaches for the associated nonconvex problems. We also test a monotone algorithm for an academic example where the operator is an $M$ matrix, and on a time-dependent optimal control problem, pointing out the advantages of employing variable penalties over a fixed penalty.
\end{abstract}
\maketitle 
\textbf{Keywords:} Sparsity, variational regularization,  reweighted minimization

\section{Introduction}

This work concerns   reconstructing sparse solutions of ill-posed problems while encouraging  more flexibility than allowed by classical regularization penalties in $\ell^p$ spaces with $p\in (0,\infty)$. Let us first recall some  literature concerning $\ell^p$ sparse regularizers. The inspiring  paper ~\cite{daubechies2004iteratethresh} pointed out their essential role  in promoting sparsity with respect to orthonormal bases for solving inverse problems, in general. The reader is referred  to
\cite{claerbout1973robust,taylor1979deconvolution,levy1981reconstruction,santosa1986linear,chen1998basispursuit} for previous works on specific  inverse problems  and to \cite{tibshirani1996lasso} for statistical approaches (e.g., LASSO). Several theoretical results regarding convergence and error estimates  can be found in  \cite{lorenz2008reglp,grasmair2008sparseregularization,ramlau2010sparse} for the convex case $p\in[1,2)$, and in  \cite{zarzer2009nonconvextikhonov,grasmair2008pleq1,10,bredies2009nonconvexregularization} for the nonconvex case $p\in(0,1)$.
Moreover, it has been shown numerically that  $\ell^p$ terms with $p \in (0,1)$, promote sparsity better than the $\ell^1$-norm (see e.g. \cite{10}), for instance in feature selection and compressed sensing \cite{11} (allowing a smaller number of measurements) or in total variation-based
image restoration \cite{5} (providing better edge preservation). There is an increasing interest in this topic not only within signal processing and image restoration, but also within fracture mechanics, optimal control and optimization - see the  introduction of \cite{GK1} . 

The expansive literature on $\ell^p$  regularization deals with many other fine and interesting aspects, which are however beyond the scope of this work. 


The focus here is on flexible sparsity variational regularization that  facilitates different penalizations for different coefficients of the regularized solution, and on numerical  approaches for the nonconvex regularized problem.   Actually, by  ``flexible sparsity'' we mean  functionals that
regularize different (groups of) indices with a different sparsity
promoting term (see also ~\cite{lorenz2016flexiblesparse}). In general, it is about partitioning  the index set $\NN$ into
sets $I_{1}$, $I_{2}$,\dots and choosing a specific penalty for $I_k$, e.g., based on a sequence of exponents
$p_{k}>0$. Note that these flexible penalties have been used in various applications  for the convex case - see e.g.~\cite{chaari2009minimization,briceno2011proximal,pustelnik2011parallel}.

There are different ways to form a regularization
functional out of these partitions. 
First, one can set the $\phi$-functional as
  \[
  \phi(u) = \sum_{k\in\NN} \sum_{j\in I_{k}}w_{j}|u_{j}|^{p_{k}}
  \]
  with different weights $w_k$. If we denote   $u_{I} = (u_{j})_{j\in I}$ and let $\norm{u_I}_{p,w} = (\sum_{j\in I}w_{j}|u_{j}|^{p})^{1/p}$ for weights $w_{j}>0$, we can write this as
  \[
  \phi(u) = \sum_{k\in\NN}\norm{u_{I_{k}}}_{p_{k},w}^{p_{k}}.
  \]
  A further generalization is considering different functions
  $\phi_{k}:[0,\infty)\to [0,\infty)$ which are nondecreasing and satisfy $\phi_{k}(0)=0$, yielding
  \begin{equation}\label{eq:phi-func-phi}
    \phi(u) = \sum_{k\in\NN} \sum_{j\in I_{k}}\phi_{k}(u_{j}).
  \end{equation}
  Here  explicit weights $w_{k}$ are not needed, as they can be incorporated  in the functions $\phi_{k}$.

Second,  the mixed norms approach  adds the different $p$-norms directly and yields
\begin{equation}\label{eq:mixed-norm}
  \norm{u}_{p_{k}} = \sum_{k\in\NN}\norm{u_{I_{k}}}_{p_{k},w}.
\end{equation}
A notable difference to the $\phi$-functional is that the latter is actually a norm if $p_{k}\geq 1$.
The reader is referred also to the section on group sparsity in  \cite{grasmair2011homogeneous}, where the case $p_k=2$ for each $k$ is tackled.

Further generalizations are possible, e.g., one could also consider a functional of the form
$(\sum_{k\in\NN}\norm{u_{I_{k}}}_{p_{k},w}^{q})^{1/q}$ (i.e. taking
a $q$-norm of the vector of values of the sub-norms instead of the
$1$-norm) or even combine different groups of sub-norms with
different exponents (e.g., one can do this hierarchically). We do not
pursue these generalizations further.

From a theoretical point of view, we extend the work \cite{lorenz2016flexiblesparse} that involves variable exponents penalties. Thus, we are concerned with existence of  minimizers for regularization with  nonconvex functions  $\phi_k$ 
$$
\phi(u)=\sum_{k\in\NN}\phi_k(\abs{u_k}),
$$
 sparsity properties of the minimizers, and convergence to a solution of the original problem in an infinite dimensional setting. As a new challenge, we propose algorithmic approaches for minimizing least squares functions with such general
nonconvex regularization terms, by relying on iterative majorization of the nonconvex part.  Thus, we write each nonconvex  function $\phi_k$ as a minimum of convex functions by means of the concave conjugate of $\phi_k$. Approaches of this type have been (re)discovered several times. To the best of our knowledge, they concern only nonconvex $\ell^p$ penalties with fixed exponent or are discussed only theoretically for variable penalties.  The earliest references we could track down are~\cite{Geman1992,Geman1995,Black1996} - see also \cite{Gorodnitsky1997} using the name iterative reweighting, while more recent rediscoveries are \cite{Chan2014,Lanza2015,37}. We refer also to \cite{Nikolova2005,Allain2006} dealing with convergence of the half-quadratic algorithms.
 There are several possibilities to realize the iterative majorization approach. We state here two methods yielding iterative reweighting algorithms, based  on concave duality for increasing functions. One of them  leads to a sequence of reweighted $\ell^1$-minimization problems (like in~\cite{yu2019iteratively,chen2014convergence}), while the other one leads to a sequence of $\ell^2$-minimization problems, see also~\cite{wipf2010irls}. We investigate convergence of the schemes and  monotonicity in the sense of decreasing the objective
function in every iteration. The $\ell^2$-minimization approach is developed into a full
algorithmic framework in Section \ref{sec:optcond}. 
Numerical results in two different situations are given, the first one for an academic M matrix example and the second one for a time-dependent optimal control problem. The scheme is proved to be effective and accurate in both situations. In the optimal control example, we carry out a comparison with the fixed penalty case and emphasize  the advantages of employing
variable penalties.

 A significant reference for the current work is \cite{GK1}, where a monotone scheme for the regularized   problem (with fixed $p$) is introduced. This inspired us to propose the algorithm from Section \ref{sec:optcond}. Moreover, in \cite{GK1}, a primal dual active set strategy is  studied and tested. The adaptation of this strategy  to flexible regularization is out of the scope of the present paper, but will  be a subject  of future work.
 
 One can  mention further numerical schemes  for nonconvex  regularization with a fixed penalty for all coefficients, such as  a generalized gradient projection method \cite{6}, a generalized iterated shrinkage algorithm \cite{52}, and a generalized Krylov subspace method combined with a reweighting technique  \cite{Lanza2015}.
In \cite{44}, a functional approach combined with a gradient technique is proposed, and in \cite{36}  an alternating direction method of multipliers (ADMM) is studied. 
We mention also \cite{XuChXuZh12}, where an iterative half thresholding for fast solution of $L_{1/2}$ regularization is proposed. Finally group sparse optimization problems have been studied in \cite{HuLiMeQiYa17} via an $\ell_{p,q}$ regularization with $p\geq 1$ and $q \in [0,1]$ and numerically solved through a proximal gradient method, and in \cite{BeHa19} through the computation of the proximal mapping and necessary optimality conditions.

Last but not least, a rule on how to choose the flexible functionals is left for future work. We would like to approach this topic  
by learning the penalty.

This paper is structured as follows. In Section \ref{sec:2}, we analyse  the general nonconvex regularization problem.  Section \ref{sec:iterative_majorization}   initiates the discussion on   iterative majorization   approaches for the mentioned problem and presents the necessary concave duality theory. In Section \ref{it_T}  we derive  a simple $\ell^2$-minimization scheme for  flexible sparsity regularization.  Section \ref{sec:optcond} is devoted to the theoretical and numerical  study of  a monotone algorithm that approximates regularized solutions.  A reweighted $\ell^1$-minimization approach is proposed and analysed in Section \ref{1-section}.

\section{Variational regularization with flexible functionals}\label{sec:2}

Let 
\begin{equation}\label{equation}
Au=y
\end{equation}
denote the operator equation we would like to stably solve via variational regularization, where $A:\ell^2\to Y$ is linear and bounded, $Y$ is a Hilbert space.

Consider the following Tikhonov regularization approach
\begin{equation}\label{reg_nonconvex}
    \min_{u\in{\ell^{\phi_k}}}\tfrac12\norm{Au-y}^{2} + \alpha \phi(u),\,\,\,\alpha>0,
\end{equation}
with $\phi$ given by 
\begin{equation}\label{phik}
\phi(u)=\sum_{k\in\NN}\phi_k(\abs{u_k}),
\end{equation}
where
\[
  \ell^{\phi_k} = \{ (u_{k}): \sum_{k}\phi_{k}(|u_{k}|)<\infty\}.
  \]
The following assumption will be used throughout this work:

(A1) The functions  $\phi_{k}:[0,\infty)\to [0,\infty]$  are nondecreasing, continuous, and satisfy $\phi_k(0)=0$ and $\lim_{t\to\infty}\phi_k(t) = \infty$, $\forall k\in\NN$. 

In the sequel, we focus on penalties of the form \eqref{phik}
with all $\phi_k$ simultaneously concave. Several examples of functions $\phi$ in this sense that satisfy (A1) are in order:
\begin{enumerate}
    \item $\displaystyle{\phi(u)= \sum_{k\in\NN} |u_k|^{p_k}}$
    \item $\displaystyle{\phi(u)=\sum_{k\in\NN} \log(|u_k|^{p_k}+1)}$
\end{enumerate}
with $p_k\in (0,1)$.
Inspired  by  \cite{akguen_yildirir} and \cite{Zhao12reweighted-l1-minimization},  the following functions are also of interest:
\begin{enumerate}
    \item[(3)]  $\displaystyle{\phi(u)= \sum_{k\in\NN} \log(|u_k|+1)+ |u_k|^{p_k},\,\,\,p_k\in (0,1)}$
    \item[(4)]  $\displaystyle{\phi(u)= \sum_{k\in\NN} (|u_k|+|u_k|^{p_k})^{q_k},\,\,\,p_k, q_k\in (0,1).}$
    \item[(5)] $\displaystyle{\phi(u)= \sum_{k\in\NN} |u_k|^{p_k}\log(|u_k|+1) ,\,\,\,p_k\in (0,1)}$
    \item[(6)]  $\displaystyle{\phi(u)= \sum_{k\in\NN} |u_k|\left(\log(|u_k|+1)\right)^{p_k} ,\,\,\,p_k\in (0,1)}.$
\end{enumerate}

 We will show convergence of  minimizers $u_\alpha$ to solutions of \eqref{equation} under additional  conditions on $\phi_k$, that will be verified for most of the examples listed above.
 
 Before that, some theoretical background on  flexible penalties will be described.
Let us start with a Kadec-Klee (or Radon-Riesz) property   for general functions \eqref{phik}.

\begin{proposition}\label{kadec_klee}
Let $\phi_{k}:[0,\infty)\to [0,\infty)$ satisfy (A1). If $\{u^n\}\subset \ell^{\phi_k}$ converges componentwise to $u\in\ell^{\phi_k}$ and  converges also in the sense $\phi(u^n)\to \phi(u)$ as $n\to\infty$, then the following convergence holds: $\phi(u^n-u)\to 0$ as $n\to\infty$.

\begin{proof} The statement can be proven using  Fatou's Lemma and taking into account that the functions $\phi_k$ are nonnegative   - see the the similar proof of Lemma 2 in \cite{grasmair2008sparseregularization}.

\end{proof}
\end{proposition}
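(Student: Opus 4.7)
The plan is to follow the standard Kadec--Klee style argument via Fatou's lemma, as suggested by the reference to \cite{grasmair2008sparseregularization}. The key observation is that, in the setting of this section, each $\phi_k$ is concave with $\phi_k(0)=0$ (and nondecreasing by (A1)), which forces subadditivity: for all $a,b\ge 0$,
\[
\phi_k(a+b)\le \phi_k(a)+\phi_k(b).
\]
Applied to $a=|u_k^n-u_k|$ and $b=|u_k|$ together with monotonicity (so that $\phi_k(|u_k^n|)\le \phi_k(|u_k^n-u_k|+|u_k|)$ is not what we want, rather the reverse): we use instead $|u_k^n-u_k|\le |u_k^n|+|u_k|$ to obtain
\[
0\le \phi_k(|u_k^n|)+\phi_k(|u_k|)-\phi_k(|u_k^n-u_k|).
\]
This nonnegativity is what opens the door to Fatou.

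Next, I would view $\NN$ as a measure space with counting measure and apply Fatou's lemma to the nonnegative sequence $a_k^n := \phi_k(|u_k^n|)+\phi_k(|u_k|)-\phi_k(|u_k^n-u_k|)$. By componentwise convergence $u_k^n\to u_k$ and continuity of $\phi_k$, one has $\liminf_{n\to\infty} a_k^n = 2\phi_k(|u_k|)$ for every $k$. Hence
\[
2\phi(u) = \sum_{k}\liminf_{n} a_k^n \;\le\; \liminf_{n}\sum_{k} a_k^n.
\]
The right-hand side is computed using the assumption $\phi(u^n)\to\phi(u)$:
\[
\liminf_{n}\sum_{k} a_k^n = \phi(u)+\phi(u) - \limsup_{n}\phi(u^n-u) = 2\phi(u) - \limsup_{n}\phi(u^n-u).
\]
Combining the two displays yields $\limsup_{n}\phi(u^n-u)\le 0$, and since $\phi(u^n-u)\ge 0$, we conclude $\phi(u^n-u)\to 0$.

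The only nontrivial step is justifying the inequality $\phi_k(|u_k^n-u_k|)\le \phi_k(|u_k^n|)+\phi_k(|u_k|)$. With (A1) alone this would fail, so I would make explicit use of the standing assumption of the section that all $\phi_k$ are concave with $\phi_k(0)=0$, which yields subadditivity on $[0,\infty)$ and then, together with monotonicity, gives the bound via $|u_k^n-u_k|\le |u_k^n|+|u_k|$. Everything else is a routine application of Fatou's lemma and continuity; the argument is essentially unchanged from the $\ell^p$-case in \cite{grasmair2008sparseregularization}, with $t\mapsto t^p$ replaced by the general concave $\phi_k$.
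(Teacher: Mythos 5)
Your argument is correct and is exactly the Fatou's-lemma proof the paper has in mind (the paper only sketches it by pointing to Lemma~2 of the cited reference and the nonnegativity of the $\phi_k$); your auxiliary sequence $a_k^n=\phi_k(|u_k^n|)+\phi_k(|u_k|)-\phi_k(|u_k^n-u_k|)$ is the standard choice, and the limsup bookkeeping is right. You are also correct to flag that nonnegativity of $a_k^n$ uses subadditivity, which comes from the section's standing assumption that the $\phi_k$ are concave with $\phi_k(0)=0$ rather than from (A1) alone.
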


 
	The following additional assumptions will be needed in the regularization analysis:
	
	(A2) \[
	\phi_k(t)\geq \frac{ct}{t+1},\,\,\,\forall t>0,\,\,\,\forall k\in\NN,
	\]
	for some $c>0$. This resembles condition (C3') in \cite{grasmair2010nonconvexsparse} imposed to the single function playing there the role of all these $\phi_k$.
	
	(A3) The set $\{t\geq 0: \phi_k(t)\leq M, \,\forall k\in\NN\}$ (where $M>0$) is  bounded in the following sense:
	
	There is $L>0$ (not depending on $k$) such that $0\leq t\leq L$, as soon as   $\phi_k(t)\leq M$, for any $k\in\NN$.
	
	This condition means existence of a uniform bound for the sublevel sets of the functions $\phi_k$.

\begin{remark}\label{examples} Assumptions (A1)-(A3) hold for the following nonconvex functions $\phi_k$ (where (A1) is clearly satisfied for each example):
\begin{enumerate}
    \item $\displaystyle{\phi(u)= \sum_{k\in\NN} |u_k|^{p_k}}$, with $p_k\in (0,1]$, and denote $p=\inf_kp_k>0.$
    Condition (A2) holds with $c=1$, since it amounts to the inequality $t^{p_k}+t^{p_k+1}\geq t$ that is true in both cases $t>1$ and $t\leq 1$. 
    
    Regarding (A3): $t^{p_k}\leq M$ for any $k\in\NN$ means $t\leq M^{1/{p_k}}\leq M^{1/{p}}$ if $M>1$. Hence, $L=\max\{1, M^{1/{p}}\}$.
      \item $\displaystyle{\phi(u)=\sum_{k\in\NN} \log(|u_k|^{p_k}+1)}$.
      
      
      We use the inequality $(a+b)^s\leq a^s+b^s$, for any $a,b>0$ and $s\in [0,1]$. Thus, we have, for any $k\in\NN$ and $t>0$:
      
      $\displaystyle{\log(t^{p_k}+1)\geq \log((t+1)^{p_k})=p_k\log(t+1)\geq p\log(t+1)\geq \frac{pt}{t+1}}$, which yields (A2) with $c=p$. 
      
      Condition (A3) is verified with $L=\max\{1, (e^M-1)^{1/{p}}\}$.
\item $\displaystyle{\phi(u)= \sum_{k\in\NN} \log(|u_k|+1)+ |u_k|^{p_k},\,\,\,p_k\in (0,1)}$.
Here one can proceed by combining the previous two examples, thus yielding (A2) with $c=2$ and $L=\max\{e^M, e^M-1+ M^{1/{p}}\}$.
\item  $\displaystyle{\phi(u)= \sum_{k\in\NN} (|u_k|+|u_k|^{p_k})^{q_k},\,\,\,p_k, q_k\in (0,1).}$
 If $\inf_kq_k=q>0$, then one has
 
 $\displaystyle{ |u_k|^{q_k}\leq(|u_k|+|u_k|^{p_k})^{q_k}\leq |u_k|^{q_k}+|u_k|^{p_kq_k}\leq |u_k|^q+|u_k|^{pq}}$
 due to  the inequality $(a+b)^s\leq a^s+b^s$ for $a,b>0, s\in(0,1)$. Here one can take $c=1$ and $L=\max\{1, M^{1/{q}}\}+\max\{1, M^{1/{pq}}\}$.
 
    \item $\displaystyle{\phi(u)= \sum_{k\in\NN} |u_k|^{p_k}\log(|u_k|+1) ,\,\,\,p_k\in (0,1).}$

    \item[(6)]  $\displaystyle{\phi(u)= \sum_{k\in\NN} |u_k|\left(\log(|u_k|+1)\right)^{p_k} ,\,\,\,p_k\in (0,1)}.$
    
    In the last two examples, the  functions  $\phi_k$ behave like $t\mapsto t^{p_k+1}$, where now $p_k+1>1$. Minimizers of the corresponding Tikhonov functionals do exist (via coercivity with respect to the $\ell^2$ norm), however the sparsity of the minimizers is not anymore guaranteed.
\end{enumerate}
Note that the popular nonconvex regularizers SCAD \cite{SCAD} and MCP  \cite{MCP} are not included in the list above since they do not satisfy the limit condition in (A1).

\end{remark}

	The next result will guarantee coercivity of the regularization functional with respect to the $\ell^1$ norm.
	
\begin{lemma}\label{inclusion}
  Let the functions $\phi_k$ verify (A1)-(A3). Then one has $\ell^{\phi_k}\subseteq\ell^1$
  in the following sense:
  
  There exists $C>0$ such that $\displaystyle{C\|u\|_1\leq \sum_k  \phi_k(|u_k|)}$.
\end{lemma}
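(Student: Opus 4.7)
My plan is to argue pointwise in $k$: establish a linear lower bound $\phi_k(|u_k|) \geq C\,|u_k|$ with a constant $C>0$ independent of $k$ and then sum. Assumption (A2) already gives $\phi_k(t) \geq \frac{ct}{t+1}$, but this only compares $\phi_k(t)$ to $t$ up to the factor $(t+1)$ in the denominator. The key point is that to turn this into a genuine linear bound, I need an a priori uniform bound on the components $|u_k|$, which is exactly what (A3) supplies.

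Concretely, for $u\in \ell^{\phi_k}$ put $M=\max(1,\phi(u))$, so that $M<\infty$. Since $\phi_k(|u_k|)\leq \phi(u)\leq M$ for every $k$, assumption (A3) delivers a single $L=L(M)>0$, independent of $k$, with $|u_k|\leq L$ for all $k$. Plugging this uniform bound back into (A2) yields
\[
\phi_k(|u_k|)\;\geq\;\frac{c\,|u_k|}{|u_k|+1}\;\geq\;\frac{c}{L+1}\,|u_k|\qquad(k\in\NN),
\]
and summing over $k$ gives
\[
\phi(u)\;=\;\sum_k \phi_k(|u_k|)\;\geq\;\frac{c}{L+1}\,\|u\|_1,
\]
which is the claimed inequality with $C=c/(L+1)$.

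The main obstacle is conceptual rather than computational: the bound $L$ produced by (A3) depends on the sublevel $M$, so the constant $C=c/(L+1)$ is naturally interpreted as valid on each sublevel $\{u:\phi(u)\leq M\}$. This is already enough for the intended application, namely the coercivity of the Tikhonov functional with respect to the $\ell^1$-norm: any sequence $(u^n)$ with $\phi(u^n)$ bounded sits inside a single sublevel and is therefore bounded in $\ell^1$. If a globally uniform constant were desired, neither (A2) alone (which only controls $t/(t+1)$) nor (A3) alone (which only gives a qualitative containment of sublevel sets) would suffice; it is precisely the combination of the two that yields the quantitative inclusion $\ell^{\phi_k}\subseteq \ell^1$.
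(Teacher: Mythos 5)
Your argument is exactly the paper's proof: use (A3) on the uniform sublevel bound $M$ to get $|u_k|\leq L$ for all $k$, then apply (A2) with $\tfrac{c|u_k|}{1+|u_k|}\geq \tfrac{c}{1+L}|u_k|$ and sum. Your closing observation that the constant $C=c/(L+1)$ depends on the sublevel $\{u:\phi(u)\leq M\}$ is a fair reading of what the argument actually delivers (the paper's own proof has the same dependence), and as you note this is all that is needed for the coercivity corollary.
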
	

\begin{proof}
Let $u\in \ell^{\phi_k}$, that is  $\sum_k\phi_k(|u_k|)<\infty$. Hence, there is $M>0$ such that $\phi_k(|u_k|)\leq M$, for any $k\in\NN$. Condition (A3) implies existence of an $L>0$ satisfying $|u_k|\leq L$, for all $k\in\NN$. By using condition (A2), we have:
\[
\frac{c}{1+L}\|u\|_1\leq \sum_k \frac{c|u_k|}{1+|u_k|}\leq \sum_k  \phi_k(|u_k|).
\]
\end{proof}

\begin{corollary}
  Let the functions $\phi_k$ verify (A1)-(A3). Then $\phi$ is coercive with respect to the $\ell^1$ norm.
\end{corollary}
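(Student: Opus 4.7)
The plan is to derive coercivity as an essentially immediate consequence of Lemma \ref{inclusion}. Recall that coercivity of $\phi$ with respect to the $\ell^1$ norm means $\phi(u) \to \infty$ whenever $\|u\|_1 \to \infty$ (equivalently, sublevel sets of $\phi$ are bounded in $\ell^1$).

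The argument would proceed by a case split on whether $u \in \ell^{\phi_k}$. If $u \in \ell^{\phi_k}$, Lemma \ref{inclusion} yields $C \|u\|_1 \leq \phi(u)$ for a constant $C > 0$ independent of $u$, so $\phi(u) \to \infty$ as $\|u\|_1 \to \infty$ along such sequences. If $u \notin \ell^{\phi_k}$, then by definition $\sum_k \phi_k(|u_k|) = +\infty$, and the coercivity bound holds trivially. Combining these two cases, any sequence $u^n$ with $\|u^n\|_1 \to \infty$ satisfies $\phi(u^n) \to \infty$.

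There is essentially no obstacle here, since Lemma \ref{inclusion} already carries the entire analytic content (namely the combined use of (A2) to control $|u_k|/(1+|u_k|)$ from below by $\phi_k(|u_k|)/c$, and (A3) to uniformly bound $|u_k|$ on sublevel sets so as to pass from $\sum_k |u_k|/(1+|u_k|)$ to $\|u\|_1$). Therefore I would simply phrase the corollary as a direct consequence of the lemma, without repeating any calculation.
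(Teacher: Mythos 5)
Your proposal matches the paper exactly: the corollary appears there with no separate proof at all, precisely because it is the immediate consequence of Lemma \ref{inclusion} that you describe (with the convention $\phi(u)=+\infty$ for $u\notin\ell^{\phi_k}$ disposing of your second case). The only caveat --- inherited from the paper's own phrasing of the lemma rather than introduced by you --- is that the constant $C=c/(1+L)$ produced in the lemma's proof depends on $u$ through the bound $M$ fed into (A3), so a fully pedantic coercivity argument should first fix a sublevel set $\{\phi\leq M_0\}$, obtain a uniform $L=L(M_0)$ from (A3), and conclude $\|u\|_1\leq M_0(1+L)/c$ there; this repair changes nothing of substance in your argument.
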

 



In the sequel one can see that the minimizers of \eqref{reg_nonconvex} are sparse, whenever they exist.

\begin{proposition}
Let the functions $\phi_k$ satisfy (A1)-(A3). If there is a solution of \eqref{reg_nonconvex}, then it is sparse.
\end{proposition}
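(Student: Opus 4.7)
The plan is to argue by contradiction, assuming a minimizer $u^{*}$ has infinitely many nonzero components and deriving a contradiction by combining the optimality of $u^{*}$ with assumption (A2). I interpret "sparse" as finitely supported, consistent with the Remark above, which observes that examples (5)--(6)—where (A2) in fact fails—need not admit sparse minimizers.

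First I would exploit a coordinate-wise competitor. For each index $k$, the element $u^{*}-u^{*}_{k}e_{k}$ (obtained from $u^{*}$ by zeroing out the $k$-th coordinate) lies in $\ell^{\phi_{k}}$, and by minimality of $u^{*}$ cannot yield a smaller value of the Tikhonov functional. Writing $r^{*}=Au^{*}-y$ and expanding $\tfrac12\|r^{*}-u^{*}_{k}Ae_{k}\|^{2}$, this comparison rearranges (using $\phi_{k}(0)=0$) to
\[
\alpha\,\phi_{k}(|u^{*}_{k}|)\;\le\;-u^{*}_{k}\langle r^{*},Ae_{k}\rangle+\tfrac12|u^{*}_{k}|^{2}\|Ae_{k}\|^{2}\;\le\;|u^{*}_{k}|\,|(A^{*}r^{*})_{k}|+\tfrac12|u^{*}_{k}|^{2}\|A\|^{2}.
\]
For any $k$ with $u^{*}_{k}\neq 0$, I then divide by $|u^{*}_{k}|$ and apply (A2) on the left, which supplies $\phi_{k}(|u^{*}_{k}|)/|u^{*}_{k}|\ge c/(|u^{*}_{k}|+1)$, to obtain the pointwise inequality
\[
\frac{\alpha c}{|u^{*}_{k}|+1}\;\le\;|(A^{*}r^{*})_{k}|+\tfrac12|u^{*}_{k}|\,\|A\|^{2}.
\]

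Finally, if the support of $u^{*}$ were infinite, then along this support one has $|u^{*}_{k}|\to 0$, since $u^{*}\in\ell^{\phi_{k}}\subseteq\ell^{1}\subseteq\ell^{2}$ by Lemma \ref{inclusion}, and also $|(A^{*}r^{*})_{k}|\to 0$, since $A$ is bounded from $\ell^{2}$ so $A^{*}r^{*}\in\ell^{2}$. Hence the right-hand side of the displayed inequality vanishes along the support while the left-hand side is eventually bounded below by $\alpha c/2>0$, which is the desired contradiction. The main obstacle is calibrating the elementary optimality comparison precisely enough so that the linear-in-$|u^{*}_{k}|$ lower bound supplied by (A2) genuinely dominates the terms produced by expanding the data-fidelity; this is exactly why (A2) is framed as $\phi_{k}(t)\ge ct/(t+1)$, whose linear behavior near zero (with a uniform constant $c$) matches the linear term in the right-hand side and allows the fact that $(A^{*}r^{*})_{k}\to 0$ to close the argument.
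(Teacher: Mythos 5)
Your proposal is correct and follows essentially the same route as the paper: the coordinate-wise competitor $u^{*}-u^{*}_{k}e_{k}$, the rearranged optimality inequality, division by $|u^{*}_{k}|$ on the support, the lower bound from (A2), and the contradiction from $|u^{*}_{k}|\to 0$ together with $A^{*}r^{*}\in\ell^{2}$. The only cosmetic difference is that the paper bounds the left-hand side uniformly by $c\alpha/(L+1)$ using the sublevel bound $L$ from (A3), whereas you let $\alpha c/(|u^{*}_{k}|+1)\to\alpha c$ along the support; both close the argument.
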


\begin{proof}
Let $x$ be a solution of \eqref{reg_nonconvex}. Then one has
\begin{equation}\label{in1}
\frac{1}{2}\|Ax-y\|^2+\alpha \phi(x)\leq \frac{1}{2}\|Au-y\|^2+\alpha \phi(u),
\end{equation}
for any $u\in \ell^{\phi_k}$. 
Note that   $\sum_n\phi_k(|x_k|)<\infty$ implies   $|x_k|\leq L$  for all $k$ and for some $L>0$ (due to (A3)). 
Fix
$i\in\NN$  and define $u^i=x-x_ie_i$, where $e_i$ is the $i$-th vector of the canonical basis.

By using $u=u^i$ in \eqref{in1}, (A2) and the boundedness of $(x_n)$, we have
\[
\frac{c\alpha |x_i|}{L+1}\leq \alpha\phi_i(|x_i|)\leq \frac{x_{i}^2}{2}\|Ae_i\|^2-x_{i}\langle {Ax-y,Ae_i}\rangle.
\]
One can further write
\[
\frac{c\alpha |x_i|}{L+1}\leq \frac{|x_{i}|^2}{2}\|Ae_i\|^2 +|x_{i}|\,|\langle {Ax-y,Ae_i}\rangle|,
\]
which means  that either $x_{i}=0$ or $x_{i}\neq 0$, in which case  one can divide by $|x_{i}|$ and obtain
\[
0<\frac{c\alpha }{L+1}\leq \frac{|x_{i}|}{2}\|Ae_i\|^2 +|\langle {Ax-y,Ae_i}\rangle|=\frac{|x_{i}|}{2}\|Ae_i\|^2 +|\langle {A^*(Ax-y),e_i}\rangle|.
\]
If there are infinitely many $i$ with $x_{i}\neq 0$, then taking $i\to\infty$ above yields a contradiction. This is due to the fact that the right-hand side converges to zero as $\|x\|_1<\infty$ cf. Lemma \ref{inclusion}, and $A^*(Ax-y)=(\langle {A^*(Ax-y),e_i}\rangle)\in\ell^2$ since $A^*:Y\to\ell^2$. Therefore, $x$ has only finitely many nonzero components.
\end{proof}

\begin{remark}
In principle, one could also obtain estimates on the  sparsity level of a solution $x$ of \eqref{reg_nonconvex} in a similar spirit to \cite[Remark 5.1]{bredies2014nonconvexminimization}, however, this would need a known lower bound on the size of the non-zero entries. In \cite{bredies2014nonconvexminimization}, this is known, since the penalty is $\sum_k|x_k|^p$ for some fixed $p<1$. In our case, our assumptions do not allow to conclude the existence of such a lower bound. What we get is, that every minimizer $x$ fulfills
\begin{align*}
    \tfrac12\norm{b}^2 & = \tfrac12\norm{A0-b}^2 + \alpha\phi(0)\\
    & \geq \tfrac12\norm{Ax-b}^2 + \alpha\phi(x)\\
    & \geq \alpha\sum_{x_k\neq 0}\phi_k(|x_k|).
\end{align*}
This implies that
\begin{align*}
    \tfrac12\norm{b}^2\geq \alpha\cdot\inf_{x_k\neq 0}\big[\phi_k(|x_k|)\big]\cdot\#\{k\mid x_k\neq 0\},
\end{align*}
and consequently, the number of nonzero entries in a minimizer $x$ is bounded from above by
\begin{align*}
    \frac{\norm{b}^2}{2\alpha\inf_{x_k\neq 0}\big[\phi_k(|x_k|)\big]},
\end{align*}
if the infimum is non-zero. But this can't be guaranteed by our assumptions.
\end{remark} 

We discuss next existence of solutions of \eqref{reg_nonconvex}.

\begin{proposition} 
  \label{exist_uniq}  Let $A:\ell^2\rightarrow Y$ be a linear and bounded operator which is also $\ell^1$-weak$^*$-weak sequentially continuous, where $Y$ is a Hilbert space. Assume that the functions  $\phi_{k}:[0,\infty)\to [0,\infty)$ verify (A1)-(A3). Then for any $\alpha>0$, the Tikhonov functional \eqref{reg_nonconvex}
has at least one minimizer $u_{\alpha}$ in $\ell^{\phi_k}$.
\end{proposition}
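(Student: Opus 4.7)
My plan is to use the direct method of the calculus of variations, with the topology of weak$^{\ast}$ convergence on $\ell^1$ as the key tool (which is available precisely because Lemma~\ref{inclusion} embeds $\ell^{\phi_k}$ into $\ell^1$).

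First, since $T(u):=\tfrac12\norm{Au-y}^2+\alpha\phi(u)$ satisfies $T(0)=\tfrac12\norm{y}^2<\infty$, the infimum is finite and a minimizing sequence $\{u^n\}\subset\ell^{\phi_k}$ exists. From $T(u^n)\leq T(0)+1$ for large $n$, I would extract a uniform bound $\phi(u^n)\leq M/\alpha$. Assumption (A3) then gives a uniform componentwise bound $|u^n_k|\leq L$ for all $k,n$, and Lemma~\ref{inclusion} upgrades this to a uniform bound on $\norm{u^n}_1$.

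Next, since $\ell^1=(c_0)^{\ast}$, Banach--Alaoglu provides a subsequence (not relabeled) and some $u_\alpha\in\ell^1$ with $u^n\rightharpoonup^{\ast} u_\alpha$ in $\ell^1$; in particular $u^n_k\to (u_\alpha)_k$ componentwise. Now I invoke the $\ell^1$-weak$^{\ast}$-to-weak sequential continuity of $A$, the hypothesis tailor-made for this step, to conclude $Au^n\rightharpoonup Au_\alpha$ in $Y$. Weak lower semicontinuity of $\norm{\cdot-y}^2$ then yields
\[
\tfrac12\norm{Au_\alpha-y}^2\leq\liminf_{n\to\infty}\tfrac12\norm{Au^n-y}^2.
\]
For the penalty term, continuity of each $\phi_k$ (from (A1)) and componentwise convergence give $\phi_k(|u^n_k|)\to\phi_k(|(u_\alpha)_k|)$ for every $k$; since the summands are nonnegative, Fatou's lemma yields
\[
\phi(u_\alpha)=\sum_{k\in\NN}\phi_k(|(u_\alpha)_k|)\leq\liminf_{n\to\infty}\sum_{k\in\NN}\phi_k(|u^n_k|)=\liminf_{n\to\infty}\phi(u^n).
\]
Adding the two inequalities shows $T(u_\alpha)\leq\liminf T(u^n)=\inf T$. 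In particular $\phi(u_\alpha)<\infty$, so $u_\alpha\in\ell^{\phi_k}$, and $u_\alpha$ is a minimizer.

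The only delicate point is the passage from $u^n\rightharpoonup^{\ast} u_\alpha$ in $\ell^1$ to $Au^n\rightharpoonup Au_\alpha$ in $Y$: $A$ is a priori only defined and bounded on $\ell^2$, and an $\ell^1$-bounded sequence need not be $\ell^2$-bounded in a form that makes the usual weak continuity argument work. This is exactly why the hypothesis of $\ell^1$-weak$^{\ast}$-weak sequential continuity is imposed on $A$; everything else is driven by the structural assumptions (A1)--(A3) through the boundedness lemma. Note that no uniqueness is claimed, which is consistent with $\phi$ being nonconvex.
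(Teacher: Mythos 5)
Your proof is correct and follows exactly the route the paper sketches (the paper only states that the result follows ``by usual techniques'' from the $\ell^1$-coercivity of $\phi$ and weak$^{\ast}$ lower semicontinuity via componentwise convergence); you have simply filled in the details of the direct method, including the Banach--Alaoglu extraction in $\ell^1=(c_0)^{\ast}$, the role of the weak$^{\ast}$-to-weak continuity hypothesis on $A$, and the Fatou argument for $\phi$. No gaps.
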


\begin{proof} One can show the statement by usual techniques, taking into account  the coercivity of $\phi$ as above and the weak$^*$ lower semicontinuity of the involved functionals (the latter holds due to the  componentwise convergence of weak$^*$ convergent subsequences and to the lower semicontinuity of each $\phi_k$).
\end{proof}

A convergence result for the proposed regularization method can be formulated in the following.

\begin{proposition} 
 Let $A:\ell^2\rightarrow Y$ be a linear and bounded operator which is also $\ell^1$-weak$^*$-weak sequentially continuous, where $Y$ is a Hilbert space. Assume that there is solution of $Au=y$ and that  the functions $\phi_{k}:[0,\infty)\to [0,\infty)$ satisfy (A1)-(A3).  If $\alpha_n\to 0$ as $n\to\infty$, then there is a subsequence $(u_{\alpha_{n_j}})$ of the  sequence of minimizers $(u_{\alpha_{n}})$ of the problems 
\[
 \min_{u}\tfrac12\norm{Au-y}^{2} + \alpha_n \phi(u),\,\,\,
\]
which converges to a solution $\bar u$ of the operator equation in the sense that $\phi(u_{\alpha_{{n_j}}}-\bar u)\to 0$ as $j\to\infty$ (also in the $\ell^1$-norm).

\end{proposition}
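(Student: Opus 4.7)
The plan is to follow the standard variational-regularization recipe, adapted to our flexible-functional setting, and finish with the Kadec-Klee property established in Proposition \ref{kadec_klee}.

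First I would pick a $\phi$-minimizing solution $u^\dagger$ of $Au=y$ (its existence follows from the hypothesis that a solution exists, together with weak$^*$ compactness in $\ell^1$ and weak$^*$ lower semicontinuity of $\phi$ exactly as in Proposition \ref{exist_uniq}). Exploiting the minimality of $u_{\alpha_n}$, the comparison
\[
\tfrac12\|Au_{\alpha_n}-y\|^2+\alpha_n\phi(u_{\alpha_n})\leq \tfrac12\|Au^\dagger-y\|^2+\alpha_n\phi(u^\dagger)=\alpha_n\phi(u^\dagger)
\]
yields at once $\|Au_{\alpha_n}-y\|\to 0$ and $\phi(u_{\alpha_n})\leq \phi(u^\dagger)$ for all $n$. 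The $\phi$-bound combined with (A3) provides a uniform componentwise bound on the $u_{\alpha_n}$, and then (A2) (or equivalently Lemma \ref{inclusion}) converts this into a uniform $\ell^1$ bound. Since $\ell^1=c_0^*$, Banach-Alaoglu yields a weak$^*$ convergent subsequence $u_{\alpha_{n_j}}\rightharpoonup^* \bar u$ in $\ell^1$.

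Next I would identify the weak$^*$ limit as a solution. The $\ell^1$-weak$^*$-weak sequential continuity of $A$ gives $Au_{\alpha_{n_j}}\rightharpoonup A\bar u$ in $Y$, and since at the same time $Au_{\alpha_{n_j}}\to y$ strongly, uniqueness of weak limits forces $A\bar u=y$. To get the right value of $\phi$ on the limit I use that weak$^*$ convergence in $\ell^1$ implies componentwise convergence (test against the canonical basis $e_k\in c_0$); then continuity and nonnegativity of $\phi_k$ together with Fatou's lemma yield the lower semicontinuity inequality $\phi(\bar u)\leq \liminf_j\phi(u_{\alpha_{n_j}})\leq \phi(u^\dagger)$. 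Since $\bar u$ itself solves $Au=y$, the $\phi$-minimality of $u^\dagger$ gives $\phi(\bar u)=\phi(u^\dagger)$, and therefore $\phi(u_{\alpha_{n_j}})\to \phi(\bar u)$.

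At this point Proposition \ref{kadec_klee} applies directly (we have componentwise convergence together with convergence of the functional values), giving $\phi(u_{\alpha_{n_j}}-\bar u)\to 0$. The remaining $\ell^1$-convergence is a direct consequence of Lemma \ref{inclusion}: once $\phi(u_{\alpha_{n_j}}-\bar u)\leq 1$ (say), (A3) delivers a uniform componentwise bound $L$ on $u_{\alpha_{n_j}}-\bar u$ that is independent of $j$, so (A2) supplies the uniform constant $C=c/(1+L)$ with $C\|u_{\alpha_{n_j}}-\bar u\|_1\leq \phi(u_{\alpha_{n_j}}-\bar u)\to 0$.

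The only subtle point I anticipate, and therefore the one I would treat with care, is the upgrade from weak$^*$ componentwise convergence to the functional convergence $\phi(u_{\alpha_{n_j}})\to \phi(\bar u)$: the sandwich argument requires both Fatou-type lower semicontinuity (which needs the continuity and non-negativity in (A1)) and the $\phi$-minimality of $u^\dagger$ among all solutions. Everything else is routine once the uniform $\ell^1$ bound from (A2)-(A3) has been secured.
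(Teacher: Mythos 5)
Your proof is correct and follows exactly the route the paper intends: the paper states this proposition without giving a proof, but the ingredients you assemble --- the comparison inequality, the uniform $\ell^1$ bound from Lemma \ref{inclusion} via (A2)--(A3), weak$^*$ compactness of bounded sets in $\ell^1=c_0^*$, Fatou-type lower semicontinuity, and the Kadec--Klee property of Proposition \ref{kadec_klee} --- are precisely the ones the authors prepared for this purpose. The only point worth flagging is that the hypothesis ``there is a solution of $Au=y$'' must be read as a solution in $\ell^{\phi_k}$ (finite $\phi$-value), which your construction of the $\phi$-minimizing element $u^\dagger$ tacitly requires.
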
\medskip

A similar result holds also in the case that noisy data $y^\delta $  with $\|y-y^\delta\|\leq\delta $ is considered.

\section{Iterative majorization approaches via concave duality}
\label{sec:iterative_majorization}

In the sequel, we focus on numerical  methods for the variational nonconvex problem \eqref{reg_nonconvex}. 
Some notes on existing literature: \cite{wipf2010irls} analyses general reweighting schemes that lead to algorithms similar to the $\ell^{1}$ and $\ell^{2}$ algorithms here, but does not use the majorization approach.  The work \cite{lai2013irls} analyses $\ell^{q}$ minimization with IRLS via $\ell^{1}$ and investigates exact recovery and convergence.

In this section, we consider an approach for minimizing least squares functions with nonconvex regularization terms which is based on iterative majorization of the nonconvex term. The approach has been (re)discovered several times. The first references we know about are \cite{Geman1992,Geman1995,Black1996} and date back to the early 1990s. An apparently independent discovery from the same time can be found in \cite{Gorodnitsky1997} under the name iterative reweighting. Recent rediscoveries are, e.g.  \cite{Chan2014,Lanza2015}, while papers that consider convergence of the respective methods are, e.g., \cite{Nikolova2005,Allain2006}.

The idea to minimize a functional of the form
\[
\frac{1}{2}\norm{Ax-y}^{2} + \alpha\sum_{k=1}^{\infty}\phi_{k}(|x_{k}|)
\]
with functions $\phi_k:[0,\infty)\to[0,\infty)$ which fulfil $\phi_k(0)=0$ is as follows: Since the one-dimensional functions $\phi_{k}$ may not be convex, it is of interest to write each function as a minimum of convex functions.  There are several possibilities to do this, as one can see below.  One can start with using scaled and shifted quadratic functions centered at $0$, i.e., 
\begin{align}\label{eq:def-phik-quad}
    \phi_{k}(t) = \inf_{s\geq 0}(st^{2} + \psi_{k}(s))
\end{align}
for some function $\psi_{k}$.
Alternatively, one can also write $\phi_{k}(t)$ as a minimum of scaled and shifted absolute values centered at $0$,
\begin{align}\label{eq:def-phik}
\phi_{k}(t) = \inf_{s\geq 0}(st + \tilde\psi_{k}(s)).
\end{align}
Figure \ref{fig:majorization-of-sqrt} shows the majorization with absolute values and quadratics for the function $\varphi(t)=\sqrt{\abs{t}}$.

\begin{figure}
    \centering
    \includegraphics{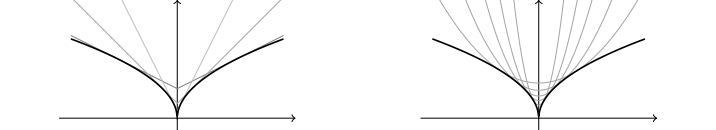}
    \caption{Majorization of the function $\varphi(t) =\sqrt{\abs{t}}$ by shifted and scaled absolute values (left) and quadratics (right).}
    \label{fig:majorization-of-sqrt}
\end{figure}

In the first case~\eqref{eq:def-phik-quad}, the minimization  problem becomes
\[
\min_{x,s\geq 0}\frac{1}{2}\norm{Ax-y}^{2}+ \sum_{k=1}^{\infty}(s_{k}|x_{k}|^{2} + \psi_{k}(s_{k})).
\]
The minimization over $x$ is now a quadratic problem and the minimization over $s$ decouples over $k$, thus yielding simple one-dimensional minimization problems. Note that the minimization over $s$ and $x$ simultaneously is still difficult, but alternating minimization is comparably simpler.
In the second case~\eqref{eq:def-phik},
one would get
\[
\min_{x,s\geq 0}\frac{1}{2}\norm{Ax-y}^{2}+ \sum_{k=1}^{\infty}(s_{k}|x_{k}| + \tilde\psi_{k}(s_{k})).
\]
The minimization over $x$ is an $\ell^1$-penalized least squares problem, for which a number of methods is available \cite{rao1997focuss,daubechies2003iteratethresh,lorenz2008conv_speed_sparsity,lorenz2008ssn,lorenz2008harditer,figueiredo2007gradproj,figueiredo2009sparsa,vandenberg2007spgl1}, while the minimization over $s$, again, decouples over $k$.

Note that  equations~\eqref{eq:def-phik-quad} and~\eqref{eq:def-phik} resemble the definition of the convex conjugate (basically up to sign changes). For example, one can see that $\psi_k$ from~\eqref{eq:def-phik}  fulfills $\psi_k(t) = -(-\phi_k)^*(-t)$, where $(-\phi_k)^*$ is the usual convex conjugate.
Since the multiple minus-signs can be confusing, we present in the following a special notion of duality in the case of concave increasing functions.


  We did not find a reference that contains all the necessary results regarding the concave conjugate and the superdifferential notions for concave functions. Thus,  a presentation of this theory is included for the sake of completeness. It will be largely along the lines of convex duality and hence, most proofs can be omitted. 

We will
investigate the following set of functions:
\[
\CI = \{\psi:[0,\infty)\to[-\infty,\infty)\ \mid \psi\ \text{concave and increasing}\}.
\]
For  $\psi\in \CI$, define
\[
\maj(\psi) = \{(t,l)\in [0,\infty)\times[-\infty,\infty)\ \mid st+l\geq \psi(s),\,\forall s\geq 0\}.
\]
The pairs $(t,l)\in\maj(\psi)$ parametrize the linear functions on $[0,\infty)$ which majorize $\psi$, where  $t$ is its slope and $l$ is the intercept.
Note that this construction is analogous to the situation of the convex conjugate where one shows that a convex and lower semi-continuous function is exactly the supremum of all affine functions below it. This leads to the well known duality theory for convex functions. An adaptation to the situation of concave increasing functions provides us with the following results.

\begin{proposition}
  For any subset $M\subset [0,\infty)\times[-\infty,\infty)$, it holds that the function
  \[
  \varphi(s) = \inf_{(t,l)\in M}st+l
  \]
  defined on $[0,\infty)$ is in $CI$.
  
  Moreover, every function $\psi\in CI$ can be written as
  \[
  \psi(s) = \inf_{(t,l)\in\maj(\psi)}st+l.
  \]
\end{proposition}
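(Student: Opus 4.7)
The plan is to mirror, in the setting of concave and increasing functions on $[0,\infty)$, the standard biconjugate argument from convex analysis.

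For the first claim, I would simply observe that, since every $(t,l)\in M$ has $t\ge 0$, each affine map $s\mapsto st+l$ is concave and nondecreasing. A pointwise infimum of concave functions is concave, and a pointwise infimum of nondecreasing functions is nondecreasing, so $\varphi\in\CI$ with no further argument.

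For the second claim, the inequality $\psi(s)\le\inf_{(t,l)\in\maj(\psi)}(st+l)$ is immediate from the definition of $\maj(\psi)$. The real content is the reverse inequality, for which I would construct, at each fixed $s_0\ge 0$, a single pair $(t,l)\in\maj(\psi)$ realising $ts_0+l=\psi(s_0)$. The natural candidate is a supporting line from above: pick $t$ in the concave superdifferential of $\psi$ at $s_0$ and set $l=\psi(s_0)-ts_0$. Concavity of $\psi$ guarantees the existence of such a supergradient at every interior point of the effective domain $\{s\ge 0 : \psi(s)>-\infty\}$, and the monotonicity of $\psi$ forces $t\ge 0$, since otherwise $\psi(s_0)+t(s-s_0)$ would tend to $-\infty$ as $s\to\infty$ and could not majorize the nondecreasing $\psi$.

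The main obstacle is handling the boundary and degenerate cases cleanly. At the left endpoint of the effective domain the superdifferential may be only one-sided or have infinite slope; I would then approximate by supporting lines at nearby interior points, let the slope grow and adjust the intercept so that $ts_0+l\to\psi(s_0)$, and pass to the limit. At points $s_0$ where $\psi(s_0)=-\infty$ the infimum must be $-\infty$; this can be obtained by deforming any fixed majorizer $(t_0,l_0)$ into $(t_0+\alpha,\,l_0-\alpha b)$ with $b=\inf\{s : \psi(s)>-\infty\}$, which stays in $\maj(\psi)$ because the added linear term $\alpha(s-b)$ is nonnegative on the effective domain, while driving $s_0 t+l\to-\infty$ as $\alpha\to\infty$ whenever $s_0<b$. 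The fully degenerate case $\psi\equiv-\infty$ is then automatic.
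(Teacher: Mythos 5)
Your argument is correct and follows essentially the same route as the paper: the paper's proof consists of the monotonicity observation (identical to yours) plus an appeal to ``analogy with the convex case'' for the representation formula, which is exactly the supporting-line construction you spell out in detail. The only caveat --- shared by the paper, since it is really a gap in the statement rather than in your proof specifically --- is that your approximation at the boundary tacitly uses $\psi(s_n)\to\psi(s_0)$ as $s_n\to s_0$, which requires upper semicontinuity of $\psi$ at $s_0=0$ (or at the left endpoint of the effective domain); a concave increasing function that jumps down at that point, such as $\psi(0)=-1$ and $\psi(s)=\sqrt{s}$ for $s>0$, belongs to $\CI$ but fails the representation there, since the infimum of majorizing affine functions is automatically upper semicontinuous.
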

\begin{proof}
  Most of the claim follows by analogy to the convex case. The only thing which is missing is the monotonicity, that can be seen as follows: If $s_{1}\leq s_{2}$, then (since $t\geq 0$)
  $s_1t+l\leq s_{2}t+l$ for all $(t,l)\in M$. This shows that
  $\varphi(s_{1})\leq \varphi(s_{2})$.
\end{proof}

\begin{definition}
  For $\psi\in CI$,   the concave conjugate is defined by
  \[
  \psi^{\oplus}(t) = \inf_{s\geq 0}\big(st-\psi(s)\big).
  \]
\end{definition}
It is clear that $\psi^{\oplus}\in CI$.
\begin{proposition}\label{prop:double-conjugate}
  For all $\psi\in CI$, one has  ${\psi^{\oplus}}^{\oplus} = \psi$.
\end{proposition}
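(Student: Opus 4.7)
The plan is to reduce this to the preceding proposition, which already gives the representation $\psi(s) = \inf_{(t,l)\in\maj(\psi)}(st+l)$ for $\psi \in CI$. The key is to identify which pairs $(t,l)$ lie in $\maj(\psi)$ using $\psi^{\oplus}$, and then carry out the resulting infimum in two stages.

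First I would establish the identification
\[
(t,l) \in \maj(\psi) \iff l \geq -\psi^{\oplus}(t).
\]
This is a direct rearrangement: by definition $(t,l)\in\maj(\psi)$ means $st+l\geq\psi(s)$ for all $s\geq 0$, equivalently $l\geq\psi(s)-st$ for all $s\geq 0$, equivalently $l\geq\sup_{s\geq 0}(\psi(s)-st)=-\inf_{s\geq 0}(st-\psi(s))=-\psi^{\oplus}(t)$.

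Next I would apply the previous proposition and split the infimum by first fixing $t\geq 0$ and then minimizing over admissible $l$:
\[
\psi(s) \;=\; \inf_{(t,l)\in\maj(\psi)}(st+l) \;=\; \inf_{t\geq 0}\Bigl(\,\inf_{l\geq -\psi^{\oplus}(t)}(st+l)\Bigr).
\]
For each fixed $t$, the inner infimum is attained in the limit $l\to -\psi^{\oplus}(t)$ and equals $st-\psi^{\oplus}(t)$, provided $\psi^{\oplus}(t)>-\infty$; if $\psi^{\oplus}(t)=-\infty$ the set of admissible $l$ is empty and the inner infimum is $+\infty$, which does not affect the outer infimum. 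Therefore
\[
\psi(s) \;=\; \inf_{t\geq 0}\bigl(st-\psi^{\oplus}(t)\bigr) \;=\; {\psi^{\oplus}}^{\oplus}(s).
\]

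I do not expect any serious obstacle here; the work is mostly bookkeeping. The only subtle points are ensuring the representation from the previous proposition is genuinely available for every $\psi \in CI$ (it is, by hypothesis) and handling the extended-real value $-\infty$ that $\psi$ or $\psi^{\oplus}$ may take, which is dealt with by the convention that empty infima equal $+\infty$. The entire content of the biconjugate theorem is thus packaged into the correspondence between majorizing affine functions and points above the graph of $-\psi^{\oplus}$.
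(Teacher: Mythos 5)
Your proof is correct and is exactly the standard biconjugation argument the paper alludes to but omits (``most proofs can be omitted'' by analogy with convex duality): you reduce ${\psi^{\oplus}}^{\oplus}=\psi$ to the majorant representation of the preceding proposition via the identification $(t,l)\in\maj(\psi)\iff l\geq-\psi^{\oplus}(t)$, with the $\psi^{\oplus}(t)=-\infty$ case handled properly. Nothing further is needed.
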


We introduce a notion of supergradient for functions in $CI$ (see also  \cite{superdiff}):
\begin{definition}
  For $\psi\in CI$, we say that $s^{*}\geq 0$ is a supergradient of $\psi$ at $s$, if for all $r\geq 0$ it holds that
  \[
  \psi(s) + s^{*}(r-s)\geq \psi(r).
  \]
  We denote the set of all such $s^*$ by ${\hat{\partial}}\psi(s)$ and call it superdifferential.
\end{definition}
Note that ${\hat{\partial}}\psi(s)=-{\partial}(-\psi)(s)$, where $\partial (-\psi)$ is the subdifferential of the convex function $-\psi$.

\begin{theorem}[Fenchel inequality and Fenchel equality]\label{thm:fenchel}
  For $\psi\in CI$, one has
  \[
  \psi(s)+\psi^{\oplus}(s^{*}) \leq ss^{*},\quad \forall s\geq 0, s^*\geq 0.
  \]
  Moreover, $\psi(s)+\psi^{\oplus}(s^{*}) = ss^{*}$ if and only if $s^{*}\in\hat{\partial}\psi(s)$ (or, equivalently $s\in\hat{\partial}\psi^{\oplus}(s^{*})$).
\end{theorem}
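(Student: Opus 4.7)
The plan is to prove both parts directly from the definition of $\psi^{\oplus}$ as an infimum, essentially transcribing the familiar convex Fenchel argument to this concave-increasing setting on $[0,\infty)$.

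For the inequality, I would simply unfold the definition: for any fixed $s, s^*\geq 0$ one has
\[
\psi^{\oplus}(s^{*}) = \inf_{r\geq 0}\bigl(rs^{*}-\psi(r)\bigr) \leq ss^{*}-\psi(s),
\]
by taking $r=s$ as a particular competitor in the infimum. Rearranging yields $\psi(s)+\psi^{\oplus}(s^{*})\leq ss^{*}$.

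For the equality characterization, I would note that $\psi(s)+\psi^{\oplus}(s^{*}) = ss^{*}$ is equivalent to the infimum defining $\psi^{\oplus}(s^{*})$ being attained at $r=s$, i.e.
\[
ss^{*}-\psi(s) \leq rs^{*}-\psi(r) \quad \text{for all } r\geq 0.
\]
Rearranging gives $\psi(r)\leq \psi(s)+s^{*}(r-s)$ for all $r\geq 0$, which is precisely the definition of $s^{*}\in\hat{\partial}\psi(s)$. Conversely, if $s^{*}\in\hat{\partial}\psi(s)$, the same rearrangement shows $ss^{*}-\psi(s)$ is a lower bound for $rs^{*}-\psi(r)$, so it equals $\psi^{\oplus}(s^{*})$, and equality in the Fenchel inequality follows.

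Finally, the symmetric statement $s\in\hat{\partial}\psi^{\oplus}(s^{*})$ is obtained by applying the same argument with the roles of $\psi$ and $\psi^{\oplus}$ exchanged; this is legitimate because $\psi^{\oplus}\in CI$ and Proposition \ref{prop:double-conjugate} gives ${\psi^{\oplus}}^{\oplus}=\psi$, so the ``Fenchel equality holds iff supergradient condition'' statement for the pair $(\psi^{\oplus},\psi)$ reads exactly $s\in\hat{\partial}\psi^{\oplus}(s^{*})$. I do not expect any genuine obstacle here; the only thing to keep track of is that all suprema/infima and supergradient inequalities are restricted to the nonnegative half-line, which is consistent with the domain $[0,\infty)$ used throughout the definition of $CI$.
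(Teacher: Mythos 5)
Your proposal is correct and follows essentially the same route as the paper: the inequality by taking $r=s$ as a competitor in the infimum defining $\psi^{\oplus}$, and the equality characterization by rearranging the supergradient inequality into the statement that the infimum is attained at $r=s$. You additionally spell out the symmetric claim $s\in\hat{\partial}\psi^{\oplus}(s^{*})$ via Proposition~\ref{prop:double-conjugate}, which the paper leaves implicit; this is a harmless and legitimate addition.
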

\begin{proof}
  The inequality follows from
  \[
  \psi(s)+\psi^{\oplus}(s^{*}) = \psi(s) + \inf_{r\geq 0}(rs^{*}-\psi(r) )\leq \psi(s) + ss^{*}-\psi(s).
  \]
  For the equality, note that $s^{*}\in\hat{\partial}\psi(s)$ is by definition equivalent to  the inequality
  \[
  \psi(s) -ss^{*}\geq \psi(r) - s^{*}r,\ \text{for all $r\geq 0$}.
  \]
  In turn, this is equivalent to
  \[
  \psi(s) -ss^{*}\geq \sup_{r\geq 0}(\psi(r) - s^{*}r) = -\psi^{\oplus}(s^{*})
  \]
  which is the same as $\psi(s)+\psi^{\oplus}(s^{*})\geq ss^{*}$.  The claim follows with Fenchel's inequality.  
\end{proof}

As an example, we take $\psi(s) = s^p$ with $0<p<1$. The concave conjugate is
\begin{align*}
    \psi^\oplus(t) = \inf_{s\geq 0}(st-\psi(s)) = \inf_{s\geq 0}(st-s^p).
\end{align*}
The infimum is attained at $s=(t/p)^{1/(p-1)}$ and the value evaluates to
\begin{align*}
    \psi^\oplus(t) = \Big(\tfrac{1}{p^{1/(p-1)}} - \tfrac{1}{p^{p/(p-1)}}\Big)t^{p/(p-1)}.
\end{align*}
The special case $p=1/2$, for example, leads to $\psi^\oplus(t) = -1/(4t)$ and hence, by Proposition~\ref{prop:double-conjugate} we have the representation
\begin{align*}
    \sqrt{s} = \inf_{t\geq 0}\left(st + \tfrac1{4t}\right).
\end{align*}


\section{Iteratively reweighted quadratic regularization}\label{it_T}
This section uses the concave conjugate to derive flexible majorization approaches which lead to simple minimization algorithms for the regularized problems. A noteworthy property  of these methods is their monotonicity, in the sense that they decrease the objective function in every iteration. We consider  iterative  $\ell^2$-minimization (i.e., quadratic Tikhonov regularization) in an infinite dimensional setting and develop the concept into a full algorithmic framework. Moreover, the resulting monotone algorithm will be tested on two examples, showing its efficiency and accuracy.


To get a series of quadratic problems, we write our objective function as
\[
\min_{x}\norm{Ax-y}^{2}+ \alpha\sum_{k=1}^\infty\psi_k(\abs{x_{k}}^2).
\]
with $\psi_k\in\CI$ for all $k$.
Again, using concave conjugates, we get 
\begin{equation}\label{quadr}
\min_{x\geq 0}\Big[\norm{Ax-y}^{2}+\alpha\sum_{k=1}^\infty s_{k}\abs{x_{k}}^2 - \psi_{k}^{\oplus}(s_{k})\Big].
\end{equation}
If we alternatively minimize for $x$ and $s$, we get
\begin{enumerate}
    \item Initialize with some $x^{0}$ and $s^{0}$, set $n=0$ and iterate
\item Obtain $x^{n+1}$ by solving $A^*(Ax-y)  + \alpha\diag(s^n)x = 0$
\item $s_k^{n+1}\in\hat\partial\psi_k(\abs{x_k^{n+1}}^2)$.
\end{enumerate}

The next lemma shows that the method is indeed of descent type and quantifies the guaranteed descent in each step.

\begin{lemma}\label{lem:descent}
Let $F(x) = \norm{Ax-y}^2 + \alpha\sum_k\psi_k(\abs{x_k}^2)$ and let $x^n$ be the iterates of the above iteration. Then it holds that
\[
F(x^{n+1}) \leq F(x^n) - \norm{Ax^{n+1} - Ax^n}^2 - \alpha\sum_ks_{k}^{n}(x_k^n-x_k^{n+1})^2.
\]
\end{lemma}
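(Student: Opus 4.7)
The plan is to recognize the iteration as alternating minimization of a joint functional obtained by dualizing each $\psi_k$, and then read the descent off the quadratic identity for the $x$-subproblem combined with the Fenchel equality for the $s$-update.

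Concretely, I would introduce the joint functional
\[
G(x,s) = \norm{Ax-y}^{2} + \alpha\sum_{k}\bigl(s_{k}\abs{x_{k}}^{2} - \psi_{k}^{\oplus}(s_{k})\bigr).
\]
By Proposition~\ref{prop:double-conjugate} applied to each $\psi_{k}\in\CI$, one has $\psi_{k}(\abs{x_{k}}^{2}) = \inf_{s_{k}\geq 0}\bigl(s_{k}\abs{x_{k}}^{2}-\psi_{k}^{\oplus}(s_{k})\bigr)$, so that $F(x) = \inf_{s\geq 0} G(x,s)$. In particular, Theorem~\ref{thm:fenchel} gives the universal inequality $F(x)\leq G(x,s)$ for every $s\geq 0$, with equality whenever $s_{k}\in\hat{\partial}\psi_{k}(\abs{x_{k}}^{2})$. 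Step~(3) of the algorithm is precisely the choice that realizes this equality, so for every $n\geq 1$ (or after one preparatory $s$-update if $s^{0}$ is arbitrary)
\[
F(x^{n}) = G(x^{n}, s^{n}).
\]

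Next I would exploit that the map $x\mapsto G(x,s^{n})$ is a convex quadratic with Hessian $2\bigl(A^{*}A + \alpha\diag(s^{n})\bigr)$, and that step~(2) of the algorithm is exactly the first-order optimality condition $\nabla_{x}G(x^{n+1},s^{n}) = 0$. Expanding $G(\cdot,s^{n})$ around its minimizer $x^{n+1}$ yields the exact identity
\[
G(x^{n},s^{n}) - G(x^{n+1},s^{n}) = \norm{A(x^{n}-x^{n+1})}^{2} + \alpha\sum_{k} s_{k}^{n}(x_{k}^{n}-x_{k}^{n+1})^{2}.
\]
Applying Fenchel's inequality at the new iterate gives $F(x^{n+1})\leq G(x^{n+1},s^{n})$. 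Chaining these with the equality $F(x^{n}) = G(x^{n},s^{n})$ produces the claimed bound.

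There is no real obstacle: the only subtle point is whether $s^{n}$ lies in $\hat{\partial}\psi_{k}(\abs{x_{k}^{n}}^{2})$, which is automatic from the previous iteration's step~(3). I would flag this explicitly (or state the lemma from $n\geq 1$) to make the transition from $G(x^{n},s^{n})$ to $F(x^{n})$ rigorous, and would also remark that the $\ell^{2}$ sum and the infinite-dimensional quadratic expansion make sense because the iteration is well-defined and because the Hessian decomposition used is term-by-term in $k$.
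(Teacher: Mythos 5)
Your proof is correct and uses essentially the same ingredients as the paper's: the optimality condition defining $x^{n+1}$, the Fenchel inequality $\psi_k(\abs{x_k^{n+1}}^2)\leq s_k^n\abs{x_k^{n+1}}^2-\psi_k^\oplus(s_k^n)$, and the Fenchel equality at $(\abs{x_k^n}^2,s_k^n)$; you merely repackage the paper's direct expansion of $F(x^n)-F(x^{n+1})$ as an exact quadratic expansion of the surrogate $G(\cdot,s^n)$ about its minimizer. Your explicit flag that $s^n_k\in\hat\partial\psi_k(\abs{x_k^n}^2)$ is needed (so the claim is guaranteed only from $n\geq 1$ unless $s^0$ is chosen accordingly) is a point the paper also relies on implicitly.
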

\begin{proof}
We write (using $A^*(Ax^{n+1}-y) = -\alpha\diag(s^n)x^{n+1}$)
\begin{align*}
    F(x^n)-F(x^{n+1})& = \norm{Ax^n-y}^2 - \norm{Ax^{n+1}-y}^2 + \alpha\sum_k \psi_k(\abs{x_k^n}^2) -\psi_k(\abs{x_k^{n+1}}^2)\\
        & = 
        \norm{Ax^n-Ax^{n+1}}^2 + \scp{x^n-x^{n+1}}{A^T(Ax^{n+1}-y)} + \alpha \sum_k \psi_k(\abs{x_k^n}^2) -\psi_k(\abs{x_k^{n+1}}^2)\\
        & = \norm{Ax^n-Ax^{n+1}}^2 +  + \alpha \sum_k \psi_k(\abs{x_k^n}^2) -\psi_k(\abs{x_k^{n+1}}^2) - s_k^nx_{k}^{n+1}(x_k^n-x_k^{n+1}).
\end{align*}
Now we use that $\psi_{k}(t^{2})\leq st^{2}-\psi_{k}^{\oplus}(s)$ with $t = x_{k}^{n+1}$ and $s=s_{k}^{n}$ to get
\begin{align*}
  F(x^{n})-F(x^{n+1}) & \geq \norm{Ax^{n}-Ax^{n+1}}^{2} + \alpha\sum_{k}\psi_k(\abs{x_{k}^{n}}^{2}) - s_{k}^{n}(x_{k}^{n+1})^{2} + \psi_{k}^{\oplus}(s_{k}^{n}) - s_{k}^{n}x_{k}^{n+1}(x_{k}^{n}-x_{k}^{n+1}).
\end{align*}
By the Fenchel equality (Theorem ~\ref{thm:fenchel}) and since $s_{k}^{n}\in\hat\partial\psi_{k}(\abs{x_{k}^{n}}^{2})$ we have that $\psi_k(\abs{x_{k}^{n}}^{2}) + \psi_{k}^{\oplus}(s_{k}^{n}) = s_{k}^{n}\abs{x_{k}^{n}}^{2}$ and thus
\begin{align*}
  F(x^{n})-F(x^{n+1}) & \geq \norm{Ax^{n}-Ax^{n+1}}^{2} + \alpha\sum_{k}s_{k}^{n}\abs{x_{k}^{n}}^{2}- s_{k}^{n}\abs{x_{k}^{n+1}}^{2} - s_{k}^{n}x_{k}^{n+1}(x_{k}^{n}-x_{k}^{n+1})\\
  & =  \norm{Ax^{n}-Ax^{n+1}}^{2} + \alpha\sum_{k}s_{k}^{n}(x_{k}^{n}-x_{k}^{n+1})^{2}.
\end{align*}
\end{proof}

\begin{lemma}
  For the iterates $x^{n}$, $s^{n}$, it holds that
  \begin{enumerate}[i)]
  \item $(F(x^{n}))$ is convergent,
  \item  $\displaystyle{\sum_{n=0}^{\infty}\norm{A(x^{n+1}-x^{n})}^{2}\leq F(x^{0})}$,
  \item $\displaystyle{\sum_{n=0}^{\infty}\sum_{k=1}^\infty s^{n}_{k}(x^{n+1}_{k}-x^{n}_{k})^{2}\leq
    F(x^{0})/\alpha}$,
  \item $\displaystyle{\norm{A(x^{n+1}-x^{n})}\to 0}$ for $n\to\infty$, and
  \item  $\displaystyle{\sum_{k=1}^\infty s^{n}_{k}(x^{n+1}_{k}-x^{n}_{k})^{2}\to 0}$ for $n\to\infty$.
  \end{enumerate}
\end{lemma}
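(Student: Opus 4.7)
The proof proposal is that all five statements follow by telescoping the descent inequality established in Lemma~\ref{lem:descent}, so the main work has already been done.

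First I would address (i). Lemma~\ref{lem:descent} yields $F(x^{n+1}) \leq F(x^n)$ since the two subtracted terms are nonnegative (note $s_k^n \geq 0$ because it is a supergradient of an increasing function on $[0,\infty)$, and the squared-norm term is obviously $\geq 0$). Combined with the lower bound $F(x^n)\geq 0$ coming from $\norm{Ax-y}^2\geq 0$ and $\psi_k(\abs{x_k}^2)\geq 0$ (inherited from $\phi_k\geq 0$), the sequence $(F(x^n))$ is nonincreasing and bounded below, hence convergent.

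Next I would prove (ii) and (iii) simultaneously by telescoping. Rewriting the descent lemma,
\begin{equation*}
    \norm{A(x^{n+1}-x^n)}^2 + \alpha\sum_{k=1}^\infty s_k^n(x_k^n-x_k^{n+1})^2 \leq F(x^n)-F(x^{n+1}),
\end{equation*}
and summing from $n=0$ to $n=N$, the right-hand side telescopes to $F(x^0)-F(x^{N+1})\leq F(x^0)$ since $F\geq 0$. Letting $N\to\infty$ and using monotone convergence for the nonnegative double sum yields (ii) and (iii) at once (with $\alpha$ absorbed on the appropriate side for (iii)).

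Finally, (iv) and (v) are immediate consequences: the general term of a convergent series of nonnegative numbers tends to zero, so from (ii) we get $\norm{A(x^{n+1}-x^n)}^2\to 0$ and hence $\norm{A(x^{n+1}-x^n)}\to 0$, while (iii) gives $\sum_k s_k^n(x_k^{n+1}-x_k^n)^2\to 0$. There is no real obstacle here; the only thing to verify carefully is the nonnegativity of $s_k^n$, which I would justify once at the start by noting that $\psi_k\in\CI$ is increasing, so any supergradient must be nonnegative (otherwise the defining inequality $\psi_k(s)+s^*(r-s)\geq \psi_k(r)$ would be violated for large $r>s$).
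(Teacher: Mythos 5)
Your proposal is correct and follows essentially the same route as the paper: the paper likewise sums the inequality of Lemma~\ref{lem:descent} from $n=0$ to $m$, obtains $F(x^{m+1})\leq F(x^0)-\sum_{n=0}^m\norm{A(x^{n+1}-x^n)}^2-\alpha\sum_{n=0}^m\sum_k s_k^n(x_k^n-x_k^{n+1})^2$, and reads off all five claims. Your extra remark justifying $s_k^n\geq 0$ from monotonicity of $\psi_k$ is a detail the paper leaves implicit but is a welcome addition.
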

\begin{proof}
  Summing up the inequalities from Lemma~\ref{lem:descent} for $n=0$ to $m$, we get
  \[
  F(x^{m+1})\leq F(x^{0}) - \sum_{n=0}^{m}\norm{A(x^{n+1}-x^{n})}^{2} - \alpha\sum_{n=0}^{m}\sum_k s^{n}_{k}(x^{n}_{k}-x^{n+1}_{k})^{2}
  \]
  from which the claim follows.
\end{proof}

The above lemma does not show convergence of the sequence of iterates yet. This could, in principle, be shown by adapting results from \cite{yu2019iteratively}. We do not pursue this here further, but instead, consider the special case of flexible penalties with varying exponents $p_k$ in the next section.

\section{A monotone algorithm for the regularized problem}\label{sec:optcond}

In this section,  the previous approach  by iterative $\ell^2$-regularization will be developed into a full algorithmic framework. For simplicity,  we will focus on the case where all functions $\phi_k$ are of type $\phi_k(t) = t^{p_k}$ and 
will propose an algorithm to solve the  minimization problem
\begin{equation}\label{optprob2m}
\min_{x \in \ell^{p_k}}\frac{1}{2}||A x-y||^2+\alpha \sum_{k\in\NN}|x_k|^{p_k},
\end{equation}
with 
$\displaystyle{\ell^{p_k}:=\{x=(x_k):\,  \sum_{k\in\NN}|x_k|^{p_k}<\infty\}}$ and $(p_k)\subset (0,1)$. The behavior of the algorithm  using other $\phi_k$ functionals will be discussed in the numerics subsection.
\subsection{Convergence properties}

Assume that $\displaystyle{\inf_{k\in\NN}p_k>0}.$

We actually consider  
a slightly different version of the quadratic approximation from \eqref{quadr}. That is, we minimize over $x$ the following regularized functional
\begin{equation}
\label{optprobeps2m}
 J_\eps(x)=\frac{1}{2}||Ax-y||^2+\alpha \sum_k \Psi_{\eps, p_k}(|x_k|^2), 
\end{equation}
where for $\eps>0$, $t \geq 0$ and $p \in (0,1)$, 
\begin{equation}\label{psieps}
\Psi_{\eps,p}(t)= \left\{
\begin{array}{ll}
\frac{p}{2}\frac{t}{\eps^{2-p}} \quad &\mbox{for }\,\, 0\leq t \leq \eps^2\\
\noalign{\smallskip}
t^{\frac{p}{2}}-(1-\frac{p}{2})\eps^p \quad & \mbox{ for }\,\, t \geq \eps^2.
\end{array}
\right.\,
\end{equation}

\begin{figure}
    \centering
    \includegraphics{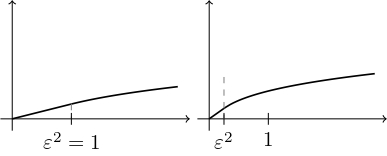}
    \caption{The function $\Psi_{\varepsilon,p}$ from \eqref{psieps} for different values of $\eps$.}
    \label{fig:my_label}
\end{figure}

That is,
\begin{equation}\label{J_e}
J_\eps(x)=\frac{1}{2}||Ax-y||^2+\alpha \sum_{|x_k|\geq \eps}\Big(|x_k|^{p_k} -(1-\tfrac{p_k}2)\eps^{p_k} \Big)+\alpha \sum_{|x_k|\leq \eps}\left( \frac{p_k}{2\eps^{2-p_k}}|x_k|^2\right). 
\end{equation}
Note that $\Psi_{\eps,p}$ is well defined, differentiable, and concave. Clearly, $J_{\varepsilon}$ has minimizers in $\ell^2$.
The necessary optimality condition  is given by
\begin{equation}
\label{optcondeps2m}
A^*Ax+\frac{\alpha p}{\max(\eps^{2-p},|x|^{2-p})}x=A^*y,
\end{equation}
where  the second addend is short for the  sequence with component $\frac{\alpha p_k}{\max(\eps^{2-p_k},|x_k|^{2-p_k})}x_k$.
Note that the optimality condition \eqref{optcondeps2m} identifies local minimizers.

 In order to solve \eqref{optcondeps2m}, the following iterative procedure is considered:
\begin{equation}\label{iter2m}
A^*Ax^{i+1}+\frac{\alpha p}{\max(\eps^{2-p},|x^{i}|^{2-p})}x^{i+1}=A^*y,\,{i\in\NN,}
\end{equation}
where nonlinear functions are applied component-wise in the second addend that has components $\frac{\alpha p_k}{\max(\eps^{2-p_k},|x_k^{i}|^{2-p_k})}x_k^{i+1}$. {Note that \eqref{iter2m} is well defined.}

We have the following convergence result.

\begin{theorem}\label{monotdec2m}
 For $\eps>0$, let $(x^i)$ be generated by \eqref{iter2m}. Then $(J_{\eps}(x^i))$ is strictly monotonically
	decreasing, unless there exists some $i$ such that $x^i = x^{i+1}$, and $x^i$ satisfies the necessary optimality condition \eqref{optcondeps2m}. Moreover, every  cluster point of $(x^i)$, of which there exists at least one, is a solution of \eqref{optcondeps2m}.
\end{theorem}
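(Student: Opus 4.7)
The plan is to view the scheme \eqref{iter2m} as a majorization--minimization algorithm. The crucial observation is that each $\Psi_{\eps,p_k}$ is differentiable and concave on $[0,\infty)$, with
\[
\Psi_{\eps,p_k}'(|x_k^i|^2) = \tfrac12 s_k^i, \qquad s_k^i := \frac{p_k}{\max(\eps^{2-p_k},|x_k^i|^{2-p_k})}.
\]
By the tangent-line inequality for concave functions, $\Psi_{\eps,p_k}(|x_k|^2) \leq \Psi_{\eps,p_k}(|x_k^i|^2) + \tfrac{s_k^i}{2}(|x_k|^2-|x_k^i|^2)$, and summing yields
\[
J_\eps(x)\;\leq\; M_i(x) := \tfrac12\norm{Ax-y}^2 + \tfrac\alpha2\sum_k s_k^i |x_k|^2 + C_i,
\]
with the constant $C_i$ chosen so that $M_i(x^i)=J_\eps(x^i)$. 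The first-order condition $\nabla M_i=0$ is exactly the linear system defining $x^{i+1}$ in \eqref{iter2m}, so $x^{i+1}$ is the \emph{unique} minimizer of the strictly convex quadratic $M_i$. Consequently $J_\eps(x^{i+1})\leq M_i(x^{i+1}) \leq M_i(x^i) = J_\eps(x^i)$, strictly unless $x^{i+1}=x^i$; in the latter case, substituting $x^{i+1}=x^i$ in \eqref{iter2m} is precisely the optimality condition \eqref{optcondeps2m} at $x^i$.

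For the cluster-point statement I would first establish a uniform $\ell^2$-bound on $(x^i)$. From the descent, $\alpha\sum_k \Psi_{\eps,p_k}(|x_k^i|^2)\leq J_\eps(x^0)$. Splitting into $I^i_-:=\{k : |x_k^i|\leq \eps\}$ and $I^i_+:=\{k : |x_k^i|>\eps\}$: on $I^i_-$ one has $\Psi_{\eps,p_k}(|x_k^i|^2)\geq c_\eps |x_k^i|^2$ for some $c_\eps>0$ (using $\inf_k p_k>0$), which bounds $\sum_{k\in I^i_-}|x_k^i|^2$; on $I^i_+$ each summand is at least $(p_k/2)\eps^{p_k}$, bounding $\#I^i_+$, and $|x_k^i|^{p_k}\leq J_\eps(x^0)/\alpha+(1-p_k/2)\eps^{p_k}$ gives a uniform bound on every $|x_k^i|$. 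Combining yields $\sup_i\norm{x^i}_2<\infty$ and, by weak sequential compactness, a subsequence $x^{i_j}\rightharpoonup x^*$ in $\ell^2$.

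Next, I would extract one-step residual decay from the majorization identity:
\[
J_\eps(x^i)-J_\eps(x^{i+1}) \geq M_i(x^i)-M_i(x^{i+1}) = \tfrac12\norm{A(x^{i+1}-x^i)}^2 + \tfrac\alpha2\sum_k s_k^i(x_k^{i+1}-x_k^i)^2.
\]
Telescoping and using $J_\eps\geq 0$ shows both series are summable. The uniform $\ell^2$-bound on $(x^i)$ produces an $i,k$-uniform lower bound $s_k^i\geq \underline{s}>0$, whence $\norm{x^{i+1}-x^i}_2\to 0$, so $x^{i_j+1}\rightharpoonup x^*$ as well.

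Finally I would pass to the limit in \eqref{iter2m}. The linear part is handled by weak continuity of $A^*A$. For the nonlinear reweighting, weak $\ell^2$-convergence forces coordinatewise convergence $x_k^{i_j}\to x_k^*$, hence $s_k^{i_j}\to s_k^*:=p_k/\max(\eps^{2-p_k},|x_k^*|^{2-p_k})$ for every $k$ by continuity of the map. Combined with the uniform bound $s_k^{i_j}\leq \eps^{-2}$ and the $\ell^2$-bound on $x^{i_j+1}$, a standard argument (a bounded sequence in $\ell^2$ with coordinatewise $\ell^2$-limit converges weakly to that limit) gives $\mathrm{diag}(s^{i_j})x^{i_j+1}\rightharpoonup \mathrm{diag}(s^*)x^*$ in $\ell^2$. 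The weak limit of \eqref{iter2m} is then exactly \eqref{optcondeps2m} at $x^*$. I expect the main obstacle to be this passage to the limit in the data-dependent reweighting term, which requires simultaneously exploiting weak compactness, coordinatewise continuity of $t\mapsto p_k/\max(\eps^{2-p_k},t^{2-p_k})$, and a uniform $\ell^\infty$-bound on $s^{i_j}$; the $\ell^2$-coercivity estimate is the other delicate ingredient, since the suprathreshold part of $\Psi_{\eps,p_k}$ only grows like $t^{p_k/2}$ and must be handled by a cardinality argument rather than by quadratic control.
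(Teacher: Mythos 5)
Your proposal is correct and follows essentially the same route as the paper: the majorization--minimization framing with the surrogate $M_i$ is just a repackaging of the paper's direct computation (testing \eqref{iter2m} with $x^{i+1}-x^i$ and invoking concavity of $\Psi_{\eps,p_k}$ to get the descent inequality), and the subsequent steps --- $\ell^2$/$\ell^\infty$ boundedness, a uniform positive lower bound on the weights from $\inf_k p_k>0$, summability of $\|x^{i+1}-x^i\|^2$, and componentwise passage to the limit along a weakly convergent subsequence --- match the paper's argument. You in fact spell out two points the paper leaves implicit (the coercivity estimate via the sub/supra-threshold splitting and the weak convergence of the reweighting term), which is a welcome addition rather than a deviation.
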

\begin{proof}
The proof follows similar arguments to that of Theorem 4.1
in \cite{GK1} and  of Lemma \ref{lem:descent}. For the sake of completeness, we sketch the main proof steps. By applying \eqref{iter2m} to $x^{i+1}-x^i$, we get
\begin{multline*}
\frac{1}{2}\|Ax^{i+1}\|^2-\frac{1}{2}\|Ax^{i}\|^2+\frac{1}{2}\|A(x^{i+1}-x^{i})\|^2+\alpha\left\langle \frac{ p}{\max(\eps^{2-p},|x^{i}|^{2-p})}x^{i+1},x^{i+1}-x^{i}\right\rangle\\
=\langle A^*y,x^{i+1}-x^i\rangle.
\end{multline*}

Note that
\begin{equation}\label{eq111}
\left\langle \frac{ p}{\max(\eps^{2-p},|x^{i}|^{2-p})}x^{i+1},x^{i+1}-x^{i}\right\rangle=\frac{1}{2}\sum_{k\in\NN}\frac{p_k(|x^{i+1}_k|^2-|x^{i}_k|^2+|x^{i+1}_k-x_k^i|^2)}{\max(\eps^{2-p_k},|x^{i}_k|^{2-p_k})}
\end{equation}
and 
\begin{equation}\label{eq112}
    \frac{1}{2} \frac{ p_k}{\max(\eps^{2-p_k},|x^{i}_k|^{2-p_k})}(|x^{i+1}_k|^2-|x^{i}_k|^2)=\Psi_{\eps,p_k}'(|x_k^i|^2)(|x^{i+1}_k|^2-|x^{i}_k|^2).
\end{equation}
Since $\Psi_{\eps,p_k}$ is concave, we have
\begin{equation}\label{eq113}
    \Psi_{\eps,p_k}(|x^{i+1}_k|^2)-\Psi_{\eps,p_k}(|x^{i}_k|^2)- \frac{1}{2} \frac{ p_k}{\max(\eps^{2-p_k},|x^{i}_k|^{2-p_k})}(|x^{i+1}_k|^2-|x^{i}_k|^2)\leq 0.
\end{equation}
Then,  using \eqref{eq111}, \eqref{eq112}, \eqref{eq113}, we get
\begin{equation}\label{eq114}
    J_{\varepsilon}(x^{i+1})+\frac{1}{2}\|A(x^{i+1}-x^{i}\|^2+\frac{1}{2}\sum_{k\in\NN}\frac{\alpha p_k}{\max(\eps^{2-p_k},|x^{i}_k|^{2-p_k})}|x_k^{i+1}-x_k^i|^2\leq J_{\varepsilon}(x^{i}).
\end{equation}
From \eqref{eq114} and coercivity of $J_{\varepsilon}$ it follows that $(x^i_k) $ is bounded in  $\ell^2$ and hence in $\ell^\infty$. This and $\displaystyle{\inf_{k\in\NN}p_k>0}$ imply existence of a constant $\kappa>0$ such that
\begin{equation}\label{45}
J_\eps(x^{i+1}) +\frac{1}{2}||A(x^{i+1}-x^i)||^2+\kappa||x^{i+1}-x^{i}||^2  \leq J_\eps(x^i),
\end{equation}
from which we conclude the first  part of the theorem.
From \eqref{45}, we deduce that
\begin{equation}\label{46}
\sum_i ||A(x^{i+1}-x^i)||^2+\kappa||x^{i+1}-x^{i}||^2  <\infty.
\end{equation}
Since $(x^i)$ is bounded in $\ell^2$, there exists a subsequence $(x^{i_l})$ and $\bar x \in \ell^2$ such that $(x^{i_l})$ converges weakly to $\bar x$ in $\ell^2$. By \eqref{46}  we have that $\displaystyle{\lim_{l\to \infty}x_k^{i_{l+1}}=\lim_{l\to \infty}x_k^{i_{l}}=\bar x_k}$. Testing \eqref{iter2m} with $e_k, k=1, \dots,$ and passing to the limit with respect to $i$ in  \eqref{iter2m}, we get that $\bar x$ is a solution to \eqref{optcondeps2m}.
\end{proof}
\begin{remark}
Theorem \ref{monotdec2m} shows that each cluster point is stationary. The extension to global convergence  under the Kurdyka-Lojasiewicz (KL) property seems a difficult task. We did not succeed to recover such a convergence, since it is not clear how to prove that $\psi  \circ J_\eps$ (where $\psi$ is the KL function) is a Lyapounov function with decreasing rate close to $||x^{i+1}-x^i||$, which is the key point on which the KL argument relies. Take, for example, a (classical) reference for  the proof of the convergence under the KL property: Theorem $8$ of paper \cite{A}. In the following inequality due to the KL property,
$$
\psi'(J_\epsilon(x^i))||\partial J_\epsilon(x^i)||\geq 1,
$$
 one needs to estimate the lefthand side  in terms of $||x^i-x^{i-1}||$.
In our case, we have
$$
\partial J_\epsilon(x^i)=A^*Ax^i-A^*y+\frac{\alpha p}{\max\{\epsilon^{2-p}, |x^i|^{2-p}\}}x^i,
$$
where
$$
A^*Ax^{i}-A^*y=-\frac{\alpha p}{\max\{\epsilon^{2-p}, |x^{i-1}|^{2-p}\}}x^{i}.
$$
Hence,
$$
\partial J_\epsilon(x^i)=\frac{\alpha p}{\max\{\epsilon^{2-p}, |x^i|^{2-p}\}}x^i-\frac{\alpha p}{\max\{\epsilon^{2-p}, |x^{i-1}|^{2-p}\}}x^{i}
$$
should be  estimated in terms of $||x^i-x^{i-1}||$ for all $\epsilon>0$, which may not be possible.\\
Moreover we remark that the KL property has been proved for the $\ell^p, p\in [0,1)$ nonconvex penalty (see \cite{KLellp}, section $5$), but in the case of a flexible $\ell^{p_k}, (p_k) \in (0,1)$ penalty like ours, it is not straightforward (at least one would need some further assumptions on the sequence $p_k$, which we did not investigate in detail).
\end{remark}
\begin{remark}
Note that  the same result holds true in finite dimension for the problem
\begin{equation}\label{optprob2mlambda}
\min_{x \in \R^d}\frac{1}{2}||A x-y||^2+\alpha \sum_k|(\Lambda x)_k|^{p_k},
\end{equation}
where $\Lambda \in \R^{m\times d}$. However, in order to have existence of a solution $\Lambda$ must satisfy some additional assumptions, e.g. $\mbox{Ker}(A) \cap \mbox{Ker}(\Lambda)=\{0\}$. 
The presence of such kind of  operator was thoroughly investigated in \cite{GK1}, where the problem is finite dimensional. One main focus of \cite{GK1} was an application in fracture mechanics, where the presence of the operator $\Lambda$ is crucial. In the present paper we are  mostly interested in the analysis of the $p_k$-sequence approach, whose setting does not seem to provide any new insight into the fracture mechanics examples considered, e.g., in \cite{GK1}. Indeed, in those kind of examples the regularization term is active just on one component of the solution. However, even though  in the present paper we do not focus on the analysis of the case of an operator $\Lambda \neq I$ inside the regularization term,  we remark that  in the numerical tests such kind of operator could be used.
\end{remark}
The following proposition establishes the convergence of minimizers of \eqref{optprobeps2m} to solutions of \eqref{optprob2m} as $\eps$  goes to zero. The proof uses a technique similar  to the one of Proposition 1 of \cite{GK1}.
As it is easily understandable from the above mentioned proof, we just underline that  the following result gives an $\ell^2$-convergence of the sequence $(x_\eps)_{\eps>0}$ as $\eps\to 0$ to any cluster point.
\begin{proposition}
Let $(x_\eps)_{\eps>0}$ be solutions to \eqref{optprobeps2m}. Then any cluster point of  $(x_\eps)_{\eps>0}$ as $\eps \to 0^{+}$, of which there exists at least one,  is a solution of \eqref{optprob2m}.
\end{proposition}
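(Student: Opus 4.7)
The plan is a $\Gamma$-convergence style argument: the approximating functionals $J_\eps$ converge to $J(x):=\tfrac12\|Ax-y\|^2+\alpha\sum_k|x_k|^{p_k}$ in such a way that minimizers converge (along subsequences) to minimizers. The strategy adapts Proposition~1 of \cite{GK1}.

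\textbf{Step 1 (uniform bound and cluster point).} Testing the minimality of $x_\eps$ against any solution $\tilde x$ of \eqref{optprob2m} (whose existence follows from Proposition~\ref{exist_uniq} applied with $\phi_k(t)=t^{p_k}$), and using the branch-by-branch inequality $\Psi_{\eps,p}(t)\leq t^{p/2}$ on $[0,\infty)$, yields $J_\eps(x_\eps)\leq J_\eps(\tilde x)\leq J(\tilde x)=:C<\infty$ uniformly in $\eps$. I would then split the components of $x_\eps$ according to whether $|x_{\eps,k}|\geq\eps$ or not. On the set $|x_{\eps,k}|\geq\eps$ the inequality $\Psi_{\eps,p_k}(|x_{\eps,k}|^2)\geq\tfrac{p_k}{2}|x_{\eps,k}|^{p_k}$ together with $\inf_k p_k>0$ controls $\sum_{|x_{\eps,k}|\geq\eps}|x_{\eps,k}|^{p_k}$, which in turn gives an $\ell^2$-bound on this part (since $|a|^2\leq|a|^{p_k}$ for $|a|\leq 1$, and a bounded $\ell^{p_{\min}}$-mass forces a bounded $\ell^2$-mass for the tail $|a|\geq 1$). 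On the set $|x_{\eps,k}|<\eps$ the optimality condition \eqref{optcondeps2m} reads $|x_{\eps,k}|=\tfrac{\eps^{2-p_k}}{\alpha p_k}|(A^*y-A^*Ax_\eps)_k|$, so these components contribute at most $O(\eps^2)\|A^*(y-Ax_\eps)\|_{\ell^2}^2$ to $\|x_\eps\|_{\ell^2}^2$ and vanish. A uniform $\ell^2$-bound follows; reflexivity of $\ell^2$ extracts a subsequence $x_{\eps_n}\rightharpoonup\bar x$ in $\ell^2$, which is the claimed cluster point.

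\textbf{Step 2 (recovery inequality).} For every fixed $z\in\ell^{p_k}$, $J_\eps(z)\to J(z)$ as $\eps\to 0$. Indeed, for each $k$ with $z_k\neq 0$, $|z_k|^2>\eps^2$ eventually and hence $\Psi_{\eps,p_k}(|z_k|^2)=|z_k|^{p_k}-(1-\tfrac{p_k}{2})\eps^{p_k}\to|z_k|^{p_k}$, while $\Psi_{\eps,p_k}(0)=0$; since $\Psi_{\eps,p_k}(|z_k|^2)\leq|z_k|^{p_k}$ and the bound is summable in $k$, Lebesgue's dominated convergence on $\NN$ gives $\sum_k\Psi_{\eps,p_k}(|z_k|^2)\to\sum_k|z_k|^{p_k}$.

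\textbf{Step 3 (liminf inequality).} I would show $\liminf_n J_{\eps_n}(x_{\eps_n})\geq J(\bar x)$. The fidelity term is weakly lower semicontinuous since $A$ is linear and bounded. For the regularizer, weak $\ell^2$-convergence yields componentwise convergence $x_{\eps_n,k}\to\bar x_k$; combined with $\eps_n\to 0$, for each $k$ with $\bar x_k\neq 0$ one has $|x_{\eps_n,k}|\geq\eps_n$ eventually and $\Psi_{\eps_n,p_k}(|x_{\eps_n,k}|^2)\to|\bar x_k|^{p_k}$, whereas for $\bar x_k=0$ trivially $\liminf_n\Psi_{\eps_n,p_k}(|x_{\eps_n,k}|^2)\geq 0=|\bar x_k|^{p_k}$. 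Fatou's lemma on $\NN$ then gives $\liminf_n\sum_k\Psi_{\eps_n,p_k}(|x_{\eps_n,k}|^2)\geq\sum_k|\bar x_k|^{p_k}$.

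Combining Steps 2 and 3 with the minimality $J_{\eps_n}(x_{\eps_n})\leq J_{\eps_n}(z)$ gives $J(\bar x)\leq\liminf_n J_{\eps_n}(x_{\eps_n})\leq\liminf_n J_{\eps_n}(z)=J(z)$ for every $z\in\ell^{p_k}$, so $\bar x$ solves \eqref{optprob2m}. The main obstacle is the uniform $\ell^2$-boundedness of $(x_\eps)$ in Step~1: because the penalty $\Psi_{\eps,p_k}$ is subquadratic at infinity and quadratic with an $\eps$-dependent (small) weight near zero, neither part alone controls the $\ell^2$-norm uniformly, and one must combine the value bound with the optimality condition \eqref{optcondeps2m} to absorb the small-component regime.
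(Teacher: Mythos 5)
Your argument is correct, and it actually supplies more detail than the paper does: the paper only refers to the technique of Proposition 1 of \cite{GK1} (a value-comparison plus compactness plus lower-semicontinuity argument, stated there in finite dimensions where coercivity of $J_\eps$ gives compactness for free) and adds the remark that cluster points are obtained in $\ell^2$. Your Steps 2 and 3 (pointwise convergence $J_\eps(z)\to J(z)$ by dominated convergence, and the liminf inequality via componentwise convergence and Fatou) are exactly the expected skeleton, and your Step 1 correctly addresses the genuinely infinite-dimensional issue of a uniform $\ell^2$ bound. Two small observations. First, the detour through the optimality condition \eqref{optcondeps2m} for the components with $|x_{\eps,k}|<\eps$ is unnecessary: on that branch $\Psi_{\eps,p_k}(|x_{\eps,k}|^2)=\tfrac{p_k}{2}\eps^{p_k-2}|x_{\eps,k}|^2\geq\tfrac{p_{\min}}{2\eps}|x_{\eps,k}|^2$ for $\eps\leq 1$ (since $2-p_k>1$), so the value bound $J_\eps(x_\eps)\leq C$ alone already gives $\sum_{|x_{\eps,k}|<\eps}|x_{\eps,k}|^2\leq\tfrac{2C\eps}{\alpha p_{\min}}\to 0$; your claim that the value bound cannot control this regime is therefore not accurate, although your alternative route is also valid. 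Second, the cluster point you produce is a \emph{weak} $\ell^2$ cluster point, whereas the paper's remark speaks of $\ell^2$-convergence to cluster points; if strong cluster points are intended, one should add the standard upgrade: combining $J_{\eps_n}(x_{\eps_n})\leq J_{\eps_n}(\bar x)\to J(\bar x)$ with the separate lower semicontinuity of the two terms yields convergence of the penalty values, and then a Kadec--Klee argument as in Proposition \ref{kadec_klee} gives convergence in $\phi$, hence in $\ell^1$ and $\ell^2$. Neither point affects the validity of the conclusion that every cluster point solves \eqref{optprob2m}.
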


\subsection{Numerical results}\label{numericsDeps}

In the sequel we investigate the performance of the monotone algorithm in practice. For this reason we will work in finite dimension.   Our aims are showing the good performance of the $(p_k)$ sequences approach in the monotone scheme,  and  comparing these results  to the ones of \cite{GK1} with $p$ fixed (and $\Lambda=I)$. Thus, we consider two problems  that have been investigated also in \cite{GK1}: the first one is an academic example where the operator $A$ is an $M$ matrix, and the second one is  a time-dependent optimal control problem. 
We have tested two different kinds of penalty, first $\phi_k(t)=t^{p_k}$, and secondly $\phi_k(t)=\log(t^{p_k}+1)$. We describe our results in detail for the first case, and write some remarks on the second case, to underline the main differences.

Our findings show that the $(p_k)$ sequences approach with $p_k \in (0,1)$ proves to be at least as effective as the $p$ fixed one (with $p \in (0,1)$), and in some cases, even more effective. Indeed,  the $(p_k)$ sequence approach is more flexible in the sense that it enables to concentrate the "sparsity" constraint where it is more needed. We have exploited this feature in the control example in the way we have chosen the sequence $(p_k)$.
In this example, we are able to localise easily, a priori, an approximate region where the solution is known to be zero. Therefore we "relax" the sparsity constraint by choosing sequences $(p_k)$ which are closer to $1$ in those regions.  In the areas where the solution is expected to be nonzero, we let the sequence tend towards the value used in \cite{GK1} in the same situation. 
Choosing sequences as described above simplifies the scheme's performance. Consistently with our expectations, the $(p_k)$ sequence approach shows a smaller number of iterations and a lower residue as compared to the approach of \cite{GK1} with $p$ fixed (see  Table \ref{tablecontrol1} and Table \ref{tablecontrol2}). 
Moreover, the $(p_k)$ sequences approach  provides better sparsity as compared to the approach of \cite{GK1} with $p$ fixed (see  Table \ref{tablecontrol1} and Table \ref{tablecontrol2}).

The situation is a bit different in the academic $M$ matrix example, since we are not able to easily know a priori information on the behaviour of the solution. However, when testing different types of $(p_k) \subset (0,1)$ sequences, we have observed more sparsity for sequences  $p_k \to p$ where $p$ is the value used in \cite{GK1}. For this reason, we will choose this type of sequences when comparing with the results of \cite{GK1}. 
The aim of this experiment is to show the good performance of the $(p_k)$ sequence approach in an academic example (with a  residue  always $O(10^{-9})$), and to show the different behaviour of the solution when changing the sequence $(p_k)\subset (0,1)$.

For convenience of the exposition, we write the algorithm in the following form, see \textbf{Algorithm $1$} for $\phi_k(t)=t^{p_k}$.
A  continuation strategy with respect  to the parameter $\eps$ is performed. In each of the following example, the initialization and range of $\eps$-values will be described.
 The key modifications in the case $\phi_k(t)=\log(t^{p_k}+1)$ are described in Remark \ref{rem:algphilog}.

The algorithm stops when the $\ell^\infty$-norm of the residue of \eqref{optcondeps2m} is $O(10^{-8})$ in the M Matrix example and  $O(10^{-15})$ in the time control problem example. At this instance, the $\ell^2$-residue is typically much smaller. Therefore, we find an approximate solution of the $\eps$-regularized optimality condition \eqref{optcondeps2m}.
The system  in \eqref{systemalg} is solved through the MATLAB function \textit{mldivive} (that is, the \textit{backslash} command).
The  initialization $x^0$  is chosen as the solution of the problem \eqref{optprob2m} where the $\ell^{p_k}$-term is replaced by the $\ell^2$-norm, that is,
\begin{equation}\label{initmoneps}
x^0=(A^*A+2\alpha)^{-1}A^*y.
\end{equation}
Inspired by the findings of \cite{GK1}, we have observed that for some values of $\alpha$ the previous initialization is not suitable (that is, the obtained residue  is too big).  In order to overcome the problem, we have successfully tested a continuation strategy with respect to increasing $\alpha$-values.
\begin{algorithm}[ht]
	\caption{Monotone algorithm + $\eps$-continuation strategy}
	\begin{algorithmic}[1]
		\STATE Initialize $\varepsilon=\varepsilon^0$, $x^0$ and $i=0$;
		\REPEAT
		\STATE  Solve for $x^{i+1}$
		\begin{equation}\label{systemalg}
		A^*Ax^{i+1}+\frac{\alpha p}{\max(\eps^{2-p},|x^{i}|^{2-p})} x^{i+1}=A^*y,
		\end{equation}
		where  the second addend is short for the vectors with component $\frac{\alpha p}{\max(\eps^{2-p_k},|x^i_k|^{2-p_k})}x^{i+1}_k$.
		\STATE Set $i=i+1$.
		\UNTIL{the stopping criterion is fulfilled}.
		\STATE  Reduce $\eps$ and repeat 2.
	\end{algorithmic}
\end{algorithm}

\begin{remark}\label{rem:algphilog}
When $\phi_k(t)=\log(t^{p_k}+1)$, the regularization \eqref{optprobeps2m} becomes
$$
 J_\eps(x)=\frac{1}{2}||Ax-y||^2+\alpha \sum_{k\in \mathbb{N}} \log(\Psi_{\eps, p_k}(|x_k|^2)+1),
$$
and equation \eqref{systemalg} written in the following compact form becomes
$$
	(A^*A+FM)x^{i+1}=A^*y,
$$
where $F$ is the diagonal matrix with $k$ component $\frac{1}{\Psi_{\eps, p_k}(|x^i_k|^2)+1}$ and $M$ is the diagonal matrix with $k$-component $\frac{\alpha p_k}{\max(\eps^{2-p_k},|x_k^{i}|^{2-p_k})}$.
\end{remark}

Note that, in the exposition of the numerical results, the total number of iterations shown in the tables  takes into account the continuation strategy with respect to $\eps$, but  it does not take into account the continuation with respect to $\alpha$.
Finally we underline that in all the tests described in the following,  the value of the objective functional for each iteration was tested to be monotonically decreasing. Note that this is consistent with the result of  Theorem \ref{monotdec2m}.

In the sequel we will adopt the following notation.  For $x \in \R^d$ we will denote  $|x|_0=\#\{k\, :\, |x_k|> 10^{-10}\},$  $|x|_0^c=\#\{k\, :\, |x_k|\leq 10^{-10}\},$ and by $||x||$  the euclidean norm of $x$.


\subsubsection{M-Matrix example}
Here  the monotone scheme will be tested for the M-matrix example. This is a classical academic example that we have chosen to confirm the good performance of our scheme for different values of the sequence $(p_k) \subset (0,1]$. More specifically, we consider
\begin{equation}
\label{optprobM2M}
\min_{x \in \R^{d^2}}\frac{1}{2}||A x-y||^2+\alpha \sum_{k=1}^{d^2}|x_k|^{p_k},
\end{equation}
where $A$ is the backward finite difference gradient
\begin{equation}\label{A}
A=(d+1)\left(\begin{array}{c} G_1\\G_2\end{array}\right),
\end{equation}
with $G_1 \in \R^{d(d+1)\times d^2}, G_2 \in \R^{d(d+1)\times d^2}$ given by
$$
G_1=I \otimes D, \quad G_2=D \otimes I.
$$
Here $I$ is the $d\times d$ identity matrix, $\otimes$ denotes the tensor product and  $D\in \R^{(d+1)\times d}$ is given by
\begin{equation}\label{D}
D=\left(\begin{array}{ccccc}
1& 0& 0& \cdots& 0\\ -1& 1& 0& \cdots& 0\\ \vspace{0.2cm}\\ 0& \cdots& 0&-1&1\\0&\cdots&0&0&-1
\end{array}\right).
\end{equation}
Then $A^* A$ is an $M$ matrix coinciding with the $5$-point star discretization on a uniform mesh on a square of the Laplacian with Dirichlet boundary conditions.  Note that \eqref{optprobM2M} can be equivalently expressed as
\begin{equation}
\label{optprob2}
\min_{x \in \R^{d^2}}\frac{1}{2}\|A x\|^2-\langle x,f\rangle +\alpha \sum_{k=1}^{d^2}|x_k|^{p_k},
\end{equation}
where $f=A^* y$. If $\alpha=0$, this is the discretized variational form of the elliptic equation
\begin{equation}
\label{elleq}
-\Delta z=f \mbox{ in } \Omega, \quad z=0 \mbox{ on } \partial \Omega.
\end{equation}
For $\alpha>0$, the variational problem \eqref{optprob2} gives a solution with piecewise constant enhancing behaviour.

We choose $f$  as in \cite{GK1}, subsection $3.4$, namely $f$ is a discretization of 

$f(x_1,x_2)=10 x_1\mbox{sin}(5x_2) \mbox{cos}(7 x_1)$.
The parameter $\eps$ varies in the same range as in \cite{GK1}, that is, it was initialized with $10^{-1}$ and decreased  to $10^{-6}$.

In Table \ref{tableMMatrix} we show the performance of \textbf{Algorithm $1$} for  $p\to 0.1$,  $h=1/64$ as mesh size and  $\alpha$ incrementally increasing by factor of $10$ from $10^{-4}$ to $10$.  The sequence $(p_k)$ is approximated by a vector of $N=63^2$ points  with  $p_0=1.1$, generated by the MATLAB command $(p_k)=0.1+1./P$ with $P=linspace(1, 100, N)$.
In Figure \ref{fig:MMatrix} we show the graphics of the solutions for different values of $\alpha$ for which  most changes occur. 

One can observe significant differences  with respect to different values of $\alpha$. The third row of Table \ref{fig:MMatrix} shows that $|x|^c_0$ increases with $\alpha$, as expected. For example, for $\alpha=1, 10$, we have $|x|^c_0=3969$, or equivalently, $|x|_0=0$, that is, the solution to \eqref{optprob2} is  constantly zero. Moreover, we see that  $| x|^p_p$  decreases when $\alpha$ increases (see the fourth row).
In the fifth row we show the $\ell^\infty$ norm of the residue, which is $O(10^{-9})$ for  all the considered $\alpha$.

Note also that  the number of iterations is sensitive with respect to $\alpha$, in particular it increases for $\alpha$  increasing from $10^{-4}$ to $10^{-1}$ and  it decreases consistently for $\alpha=1,10$.

The algorithm  was also tested for different values of $(p_k) \subset (0,1)$. The results obtained show dependence on $(p_k)$. In particular in Figure \ref{fig:MMatrix2} we show the graphics of the solution for $(p_k)$ a random sequence, that is a vector of $N=63^2$ uniformly distributed random numbers in the interval $(0,1)$, obtained by the command \textit{rand} in MATLAB. Comparing Figure \ref{fig:MMatrix2} B) to Figure  \ref{fig:MMatrix} B), we see that  for $\alpha=0.1$  the solution is less sparse as compared to the solution for $p_k \to 0.1$.

For any $k$, we say that $k$ is a  singular component of the vector $ x$ if $k \in\{k \, :\, |x_k|<\eps\}$. In particular, note that the singular components are the ones where the $\eps$-regularization is most influential. In the sixth row of the tables  we show $Sp$, which denotes the number of the singular components of $x$. Note that it coincides with the quantity $|x|_0^c$, which reassures the validity of the $\eps$-strategy.


Moreover, we tested sequences $p_k \to 0$, but no significant improvement was remarked with respect to the above explained results. In part, this seems coherent with the results of \cite{GK1} where $p=0$ was not tested successfully.

We tested the algorithm when $\phi_k(t)=\log(t^{p_k}+1)$, according to the modification shown in Remark \ref{rem:algphilog}. The results are shown in Figure \ref{fig:MMatrixphi} only in the case $\alpha=0.1$ and $\alpha=1$, since no significant changes are observed for smaller values of $\alpha$ with respect to $\alpha=0.1$. Comparing Figure \ref{fig:MMatrixphi} a) with Figure \ref{fig:MMatrix} b) and Figure \ref{fig:MMatrixphi} b) with Figure \ref{fig:MMatrix} c), the solution is less sparse. 
Moreover, the number of iterations and the residue are much higher with respect to $\phi_k(t)=t^{p_k}$, respectively of the order $10^3$ and $10^{-1}$. 

Finally, we remark that if we modify the initialization \eqref{initmoneps}, the method converges to the same solution with no remarkable modifications in the number of iterations, which is a sign for the global nature of the algorithm.

\begin{table}[ht]
	\captionof{table}{$M$-matrix example, $p_k \to0.1$, mesh size $h=\frac{1}{64}$. Results obtained by \textbf{Algorithm $1$}.} 
	\centering
	\begin{tabular}{|l|c|c|c|c|c|c|c|}
		\hline\noalign{\smallskip}
		{\bf $\alpha$ }   &$10^{-4}$& $10^{-3}$ &$10^{-2}$&$10^{-1}$ & $1$&10 \\		
		\noalign{\smallskip}\hline\noalign{\smallskip}
		no. of iterates  & 163&785& 2046&1984& 131& 15 \\
		{\bf $|x|^c_0$ } &7 &41& 269&2617& 3969&3969\\
		{\bf $| x|^p_p$ }& $2*10^3$&$2*10^3$&$ 2* 10^{3}$&$812$&9.6&7.3\\
		$\mbox{ Residue }$ & $8*10^{-10}$ & $8*10^{-9}$ & $ 9*10^{-9}$ &  $4*10^{-9}$  & $3*10^{-15}$&$10^{-15}$ \\
		$Sp$  & 7&41&269& 2617& 3869&3969\\
		\noalign{\smallskip}\hline
	\end{tabular}
\label{tableMMatrix}
\end{table}

\begin{figure}[ht]
	\centering

	\subfloat[$\alpha=0.01$]
	{
		\includegraphics[height=5cm, width=4.5cm]{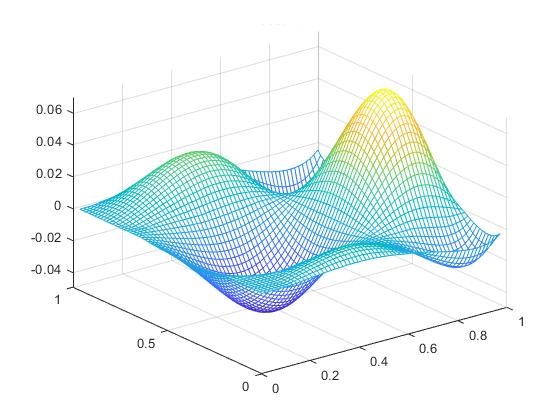}

}	
	\subfloat[$\alpha=0.1$]
	{
		\includegraphics[height=5cm, width=4.5cm]{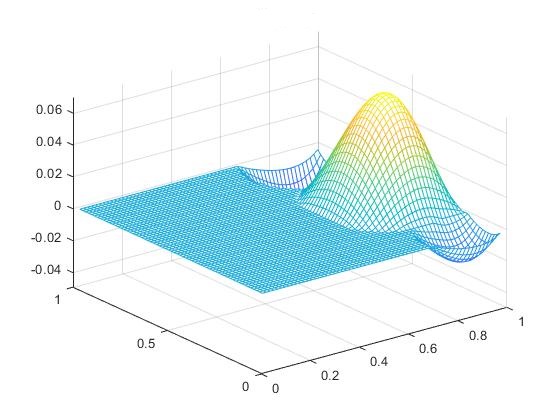}
	}
	\subfloat[$\alpha=1$]
	{
		\includegraphics[height=5cm, width=4.5cm]{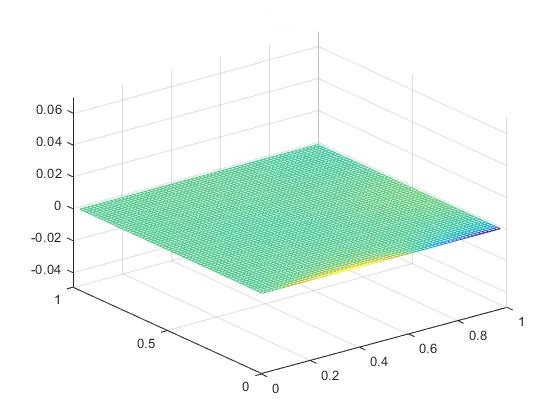}
	}
	\caption{Solution of the M-matrix problem, $p_k\to^+ 0.1,$ mesh size $h=\frac{1}{64}$. Results obtained by \textbf{Algorithm $1$}.}
	\label{fig:MMatrix}
\end{figure}

\begin{figure}[ht]
	\centering
	
	\subfloat[$\alpha=0.01$]
	{
		\includegraphics[height=5cm, width=4.5cm]{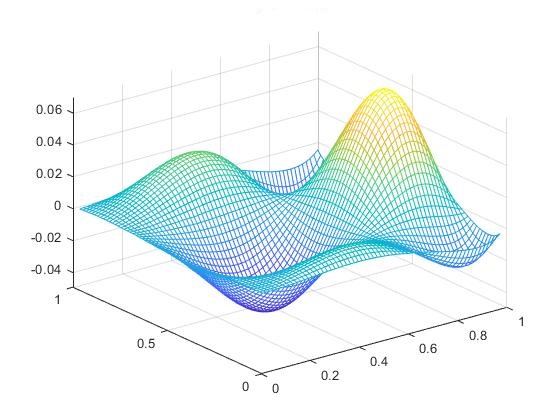}
		
	}	
	\subfloat[$\alpha=0.1$]
	{
		\includegraphics[height=5cm, width=4.5cm]{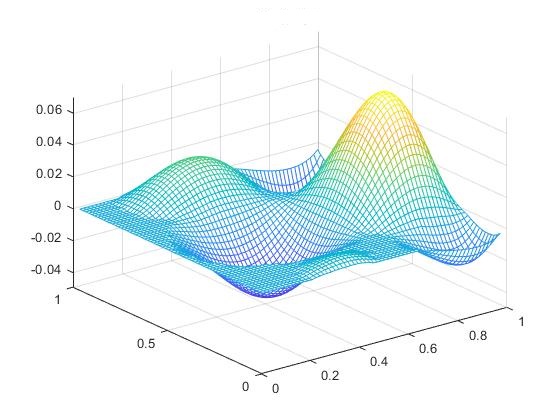}
	}
	\subfloat[$\alpha=1$]
	{
		\includegraphics[height=5cm, width=4.5cm]{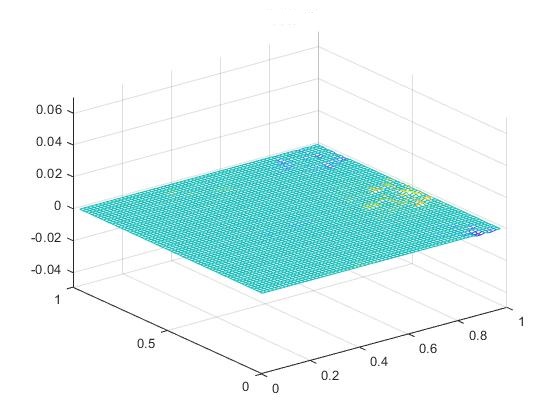}
	}
	\caption{Solution of the M-matrix problem, $p_k \in (0,1]$ random sequence, mesh size $h=\frac{1}{64}$. Results obtained by \textbf{Algorithm $1$}.}
	\label{fig:MMatrix2}
\end{figure}

\begin{figure}[ht]
	\centering

	\subfloat[$\alpha=0.1$]
	{
		\includegraphics[height=5cm, width=4.5cm]{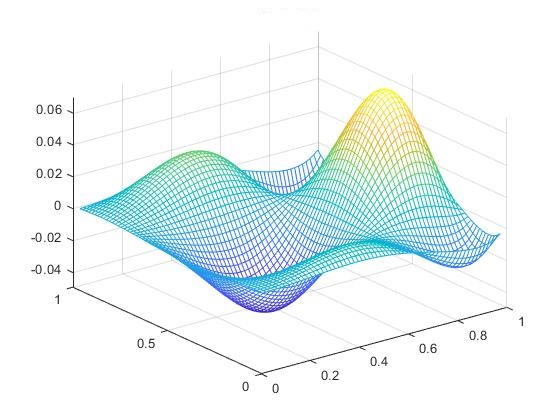}
	}
	\subfloat[$\alpha=1$]
	{
		\includegraphics[height=5cm, width=4.5cm]{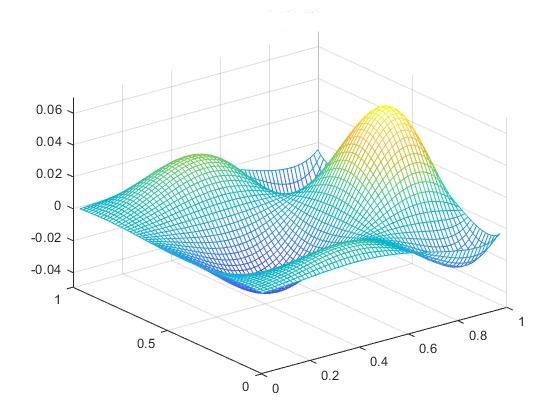}
	}
	\caption{Solution of the M-matrix problem, $p_k \subset (0,1], p_k \to 0.1$, mesh size $h=\frac{1}{64}$. Results obtained by \textbf{Algorithm $1$} as modified in Remark \ref{rem:algphilog}.}
	\label{fig:MMatrixphi}
\end{figure}

\subsubsection{Time dependent control problem}\label{subsec:control}
The following example, of particular significance for our findings, is taken from \cite{GK1}, subsection $3.2$, to which we refer for further details. For the sake of completeness, we explain in the sequel the setting that we study.
We consider the linear control system
$$
\frac{d}{dt} z(t)=\mathcal{A} z(t)+D u(t), \quad z(0)=0,
$$
that is,
\begin{equation}\label{LCSfinalstate}
z(T)=\int_0^T e^{\mathcal{A}(T-s)} D u(s) ds,
\end{equation}
where the linear closed operator $\mathcal{A}$ generates a $C_0$-semigroup $e^{\mathcal{A}t}$, $t\geq 0$ on the state space $X$. More specifically, we consider the  one-dimensional controlled heat equation for $z=z(t,x)$:
\begin{equation}\label{actionscontrol}
z_t=z_{xx}+d_1(x)u_1(t)+d_2(x)u_2(t), \quad x \in (0,1),
\end{equation}
with homogeneous boundary conditions $z(t,0)=z(t,1)=0$ and thus $X=L^2(0,1)$. The differential operator $\mathcal{A}z=z_{xx}$ is discretized in space by the second order finite difference approximation with $n=49$ interior spatial nodes ($\Delta x=\frac{1}{50}$). We use two time dependent controls $\overrightarrow u=(u_1,u_2)$ with corresponding spatial control distributions $d_i$ chosen as step functions:
$$
d_1(x)=\chi_{(.2,.3)}, \quad d_2(x)=\chi_{(.6,.7)}.
$$
The control problem consists in finding the control function $\overrightarrow u$ that steers the state $z(0)=0$ to a neighbourhood of the desired state $z_T$ at the terminal time $T=1$. We discretize the problem in time by the mid-point rule and we define the matrix $A$ as follows
\begin{equation}\label{midpoint}
A \overrightarrow  u=\sum_{k=1}^m e^{\mathcal{A}\left(T-t_{k}-\frac{\Delta t}{2}\right)} (D \overrightarrow u)_k \Delta t,
\end{equation}
where $\overrightarrow u=(u_1^1,\cdots, u_1^m,u_2^1,\cdots u_2^m)$ is a discretized control vector whose coordinates represent the values at the mid-point of the intervals $(t_k,t_{k+1})$. Note that in \eqref{midpoint} we denote by $D$ a suitable rearrangement of the matrix $D$ in \eqref{LCSfinalstate} with some abuse of notation. A uniform step-size $\Delta t=\frac{1}{50}$ ($m=50$) is utilized. More specifically, we apply our scheme to the discretized optimal control problem in time and space where $x$ from \eqref{optprob2m} is the discretized control vector $u \in \R^{2m}$ which is mapped by $A$ to the discretized output $z$ at time $1$ by means of \eqref{midpoint} and
$y$ in \eqref{optprob2m} is the discretized target function chosen as the Gaussian distribution $z_T(x)=0.4\,\mbox{exp}(-70(x-.7)^2))$ centered at $x=.7$.
The parameter $\eps$ was initialized with $10^{-3}$ and decreased down to $10^{-8}$.
Since the second control distribution is well within the support of the desired state $z_T$,  the authority of this control is expected  to be stronger than that of the first one, which is away from the target.

The structure of the example allows us to exploit the flexibility of the $(p_k)$ sequences approach since we know the two regions where the distributions of the control are active.   Therefore we can "relax" the sparsity constraint by choosing sequences $(p_k)$ which are closer to $1$ in those regions, whereas in the areas where the solution is expected to be zero, we let the sequence tends towards the value used in \cite{GK1} in the same situation.

More specifically, in Tables \ref{tablecontrol1} and \ref{tablecontrol2} we report the results of our tests for  $p=0.5$
and  $(p_k)$ approximated by a vector of $N=100$ points  with  $p_0=0.51$ generated by the MATLAB command $(p_k)=flip(0.5+1./P)$ with $P=linspace(2, 100, N)$, respectively. In both tables, $\alpha$ is incrementally increasing by factor of $10$ from $10^{-2}$ to $1$. We report only the values for the second control $u_2$ since the first control $u_1$ is always zero. Note that the residue is always of the order of $10^{-15}$.
Moreover,  the quantity $|u_2|_p^p$ decreases for $\alpha$ increasing, as expected. In Table \ref{tablecontrol1}  we see that $|u_2|_0$ is always zero except that for $\alpha=10^{-2}$, whereas in Table \ref{tablecontrol2} for $\alpha=10^{-1}$ it is different from zero.  We remark , for smaller values of $\alpha$, we find values of $|u_2|_0$ different from zero (consistent with our expectation) also whit the $(p_k)$ sequence. However, here we choose these values of $\alpha$ since we  focus on the comparison between $(p_k)$  and $p$ fixed and for these values of $\alpha$ we found the most interesting results. Consistently with our expectations, for each value of $\alpha$  the number of iterations  and the residue are always smaller with the $(p_k)$ sequence  than for $p=0.5$ fixed. As an example, we refer to the first rows of Table \ref{tablecontrol1} and Table \ref{tablecontrol2} and we remark that the same holds true for different values of $\alpha$ than those of the above mentioned tables. As mentioned above, we believe that this improvement, as compared  to  the solution given by the $p$ fixed approach, resides in the fact that with $(p_k)$ sequences we are able to give a "different weight" to each component of the solution, thus simplifying the performance of the algorithm. 

Moreover, we found some interesting improvements even with respect to the level of sparsity in the solution. In this regard, we underline that both $|u_2|_0^c$ and $|u_2|^{p_k}$ are smaller for $(p_k)$ (see second and third rows of Table \ref{tablecontrol1}) than when $p=0.5$ is fixed ( see Table \ref{tablecontrol2}).  

In the sixth row of the tables we show the number of singular components of the vector $u_2$ at the end of the $\eps$-path following scheme, that is, $Sp:=\#\{ i\,\, |\,\, |(u)_2)_i|<\eps\}$. For all  considered values of $\alpha$, we have that $Sp$ is the same as  $|u_2|_0^c$, thus confirming the effectiveness of  the $\eps$-strategy.

We tested the algorithm when $\phi_k(t)=\log(t^{p_k}+1)$, according to the modification shown in Remark \ref{rem:algphilog}. The results reported in Table \ref{tablecontrol3} show that the algorithm works well  enough  in this case, too. However, for the tested value of $(p_k)$, the performance is not as good as  in the case where $\phi_k(t)=t^{p_k}$. Indeed, the residue is always bigger, the sparsity level of the solution is comparable but slightly smaller, and the number of iterates is slightly higher.

Finally, we remark that, if we change the initialization \eqref{initmoneps}, the method converges to the same solution with no remarkable modifications in the number of iterations.

\begin{table}[tbhp]\caption{Sparsity in a time-dependent control problem, $ (p_k)$, mesh size $h=\frac{1}{50}$. Results obtained by \textbf{Algorithm $1$}.}\label{tablecontrol1}
	\centering
	\begin{tabular}{|l|c|c|c|c|}
		\hline
		{\bf $\alpha$ }  &$10^{-2}$&$10^{-1}$&$1$\\
		\hline
		no. of iterates &28&33&19\\
		\hline
		{\bf $|u_2|_0^c$ } &99&100&100 \\
		\hline
		{\bf $|u_2|^p_p$ }&6&$10^{-6}$&$10^{-7}$ \\
		\hline
		$\mbox{ Residue }$  &$10^{-17}$ &$10^{-17}$&$10^{-18}$\\
		\hline
		$\mbox{Sp}$ &99&100&100\\
		\hline
	\end{tabular}
\end{table}

\begin{table}[tbhp]\caption{Sparsity in a time-dependent control problem, $ p=0.5$, mesh size $h=\frac{1}{50}$. Results obtained by \textbf{Algorithm $1$}.}\label{tablecontrol2}
	\centering
	\begin{tabular}{|l|c|c|c|c|}
		\hline
		{\bf $\alpha$ }  &$10^{-2}$&$10^{-1}$&$1$\\
		\hline
		no. of iterates &35&60&23\\
		\hline
		{\bf $|u_2|_0^c$ } & 99&99&100 \\
		\hline
		{\bf $|u_2|^p_p$ }& 6.3&4.7&$10^{-6}$ \\
		\hline
		$\mbox{ Residue }$  &$10^{-17}$ &$10^{-11}$&$10^{-16}$\\
		\hline
		$\mbox{Sp}$ &99&100&100\\
		\hline
	\end{tabular}
\end{table}

\begin{table}[tbhp]\caption{Sparsity in a time-dependent control problem, $ (p_k)$, mesh size $h=\frac{1}{50}$. Results obtained by \textbf{Algorithm $1$} modified as in Remark \ref{rem:algphilog}.}\label{tablecontrol3}
	\centering
	\begin{tabular}{|l|c|c|c|c|}
		\hline
		{\bf $\alpha$ }  &$10^{-2}$&$10^{-1}$&$1$\\
		\hline
		no. of iterates &51&51&50\\
		\hline
		{\bf $|u_2|_0^c$ } &97&98&98 \\
		\hline
		{\bf $|u_2|^p_p$ }&10&$1.9$&$0.7$ \\
		\hline
		$\mbox{ Residue }$  &$10^{-1}$ &$10^{-2}$&$10^{-2}$\\
		\hline
		$\mbox{Sp}$ &97&98&98\\
		\hline
	\end{tabular}
\end{table}

\section{Iteratively reweighted $\ell^1$-minimization}\label{1-section}

The analysis for the reweighted $\ell^2$-minimization   cannot be directly adapted to the  $\ell^1$-minimization case, because of the absolute value function singularity. Hence, the setting of this section is finite dimensional. The reader is referred also to   \cite{chen2014convergence}, where  the fixed exponent penalty case is dealt with.

 If the problem under consideration is 
\[
\min_{x}\frac{1}{2}\norm{Ax-y}^{2}+ \alpha\sum_{k=1}^N\phi_{k}(\abs{x_{k}})
\]
with $\phi_{k}\in CI$ for all $k$, then we can use the concave conjugates of the $\phi_{k}$ functions to formulate the problem as
\begin{equation}\label{alt1}
\min_{x}\inf_{s_{k}\geq 0}\Big[G(x,s) = \frac{1}{2}\norm{Ax-y}^{2}+\alpha\sum_{k=1}^N (s_{k}\abs{x_{k}} - \phi_{k}^{\oplus}(s_{k}))\Big].
\end{equation}
Since $\phi_{k}$ is concave, $G$ in this problem is convex in $x$ and also convex in all the  $s_{k}$ (jointly), but not convex in $x$ and $s_{k}$ jointly. However, we can perform alternating minimization:
\begin{enumerate}
\item Initialize with some $x^{0}$ and $s^{0}$, set $n=0$ and iterate
\item $\displaystyle{x^{n+1}\in\argmin_{x}G(x,s^{n})}$
\item $\displaystyle{s^{n+1}\in\argmin_{s\geq 0} G(x^{n+1},s)}$.
\end{enumerate}
The minimization for $s$ decouples into one-dimensional problems
$\displaystyle{\min_{s}(s\abs{x_{k}} - \phi_{k}^{\oplus}(s))}$ with optimality conditions
$0 \in\abs{x_{k}}+\partial  (-\phi_{k}^{\oplus})(s)=\abs{x_{k}}- \hat{\partial}\phi_{k}^{\oplus}(s)$. By
Theorem~\ref{thm:fenchel}, this holds if and only if
$s\in \hat{\partial}\phi_{k}(\abs{x_{k}})$. We obtain the following method consisting of a sequence of convex
$\ell^1$-minimization problems:

Initialize with some $x^{0}$ and $s^{0}$, set $n=0$ and iterate
\begin{align}\label{reweighted_conv_gen}
x^{n+1}\in\argmin_{x}\frac{1}{2} \norm{Ax-y}^2 + \alpha\sum_{k=1}^N s_k^n\abs{x_k}
\end{align}
with
\begin{equation}\label{eps}
s_{k}^{n+1}\in\hat{\partial}\phi_{k}(\abs{x_{k}^{n+1}}+\eps).
\end{equation}
Here, we allow for a shift $\eps\geq 0$, since one may run into problems when $\hat{\partial}\phi_{k}(t)$ grows unbounded near zero (but still include $\eps=0$ to be able to treat the general case).

Thus, we will work with a shifted version of the regularized problem, that is
\begin{equation}\label{shifted}
    F(x) = \frac{1}{2}\norm{Ax-y}^{2}+ \alpha\sum_{k=1}^N\phi_{k}(\abs{x_{k}}+\eps),\quad \eps\geq 0 .
  \end{equation}

We start the analysis of the method with a proposition on descent:
\begin{proposition}\label{objective_mon}
  Assume that the functions $\phi_k$ are in $\CI$ and differentiable on $(0,\infty)$. Let
  $(x^{n})$ be generated by the algorithm \eqref{reweighted_conv_gen}-\eqref{eps}.
  Then it holds
\begin{equation}\label{decrease}
F(x^{n+1})\leq F(x^n)-\frac{1}{2}\norm{Ax^{n+1}-Ax^n}^2-\alpha D(x^{n+1},x^n),\,\forall n\in\NN,
\end{equation}
where $D(x^{n+1},x^n)$ is the sum of Bregman distances  associated to the convex functions $-\phi_k$, that is
\[\displaystyle{\sum_{k=1}^N D_{-\phi_k}} (|x_k^{n+1}|+\eps,|x_k^n|+\eps) = \sum_{k=1}^{N}\phi_{k}(\abs{x_{k}^{n}}+\eps) - \phi_{k}(\abs{x_{k}^{n+1}}+\eps) - s_{k}^{n}(\abs{x_{k}^{n}}-\abs{x_{k}^{n+1}})
\]
for some $s_{k}^{n}\in\hat\partial\phi_k(|x_k^n|+\eps)$.

\end{proposition}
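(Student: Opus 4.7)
The plan is to exploit two ingredients: (a) the optimality of $x^{n+1}$ as minimizer of a convex surrogate, which yields a descent inequality on the quadratic-plus-weighted-$\ell^1$ part together with the standard quadratic residue $\tfrac12\norm{A(x^{n+1}-x^n)}^2$; and (b) the supergradient characterization of $s_k^n$, which will convert the ``weighted $\ell^1$ difference'' into the nonconvex difference $\phi_k(|x_k^n|+\eps)-\phi_k(|x_k^{n+1}|+\eps)$ modulo a Bregman remainder.

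First, write $g(x)=\tfrac12\norm{Ax-y}^2$ and $h(x)=\alpha\sum_k s_k^n|x_k|$. Since $x^{n+1}$ minimizes $g+h$, the optimality condition reads $-\nabla g(x^{n+1})=-A^{*}(Ax^{n+1}-y)\in\partial h(x^{n+1})$. Convexity of $h$ then gives $h(x^n)\geq h(x^{n+1})-\langle A^{*}(Ax^{n+1}-y),x^n-x^{n+1}\rangle$, and the identity
\[
g(x^n)=g(x^{n+1})+\langle A^{*}(Ax^{n+1}-y),x^n-x^{n+1}\rangle+\tfrac12\norm{A(x^n-x^{n+1})}^{2}
\]
combined with the previous inequality yields, after the cross terms cancel,
\[
\tfrac12\norm{Ax^{n+1}-y}^{2}+\alpha\sum_{k}s_{k}^{n}|x_{k}^{n+1}|\leq \tfrac12\norm{Ax^{n}-y}^{2}+\alpha\sum_{k}s_{k}^{n}|x_{k}^{n}|-\tfrac12\norm{A(x^{n+1}-x^{n})}^{2}.
\]

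Second, I would add $\alpha\sum_{k}\phi_{k}(|x_{k}^{n+1}|+\eps)$ to the left-hand side and to the right-hand side (via adding and subtracting $\alpha\sum_{k}\phi_{k}(|x_{k}^{n}|+\eps)$), so that $F(x^{n+1})$ appears on the left and $F(x^n)$ on the right. The remainder on the right-hand side is precisely
\[
\alpha\sum_{k}\bigl[\phi_{k}(|x_{k}^{n}|+\eps)-\phi_{k}(|x_{k}^{n+1}|+\eps)-s_{k}^{n}(|x_{k}^{n}|-|x_{k}^{n+1}|)\bigr],
\]
which is exactly $\alpha D(x^{n+1},x^n)$ as defined in the statement. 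Because $\phi_k\in\CI$ is concave, $-\phi_k$ is convex and $-s_k^n\in\partial(-\phi_k)(|x_k^n|+\eps)$, so each summand is a genuine (nonnegative) Bregman distance $D_{-\phi_k}(|x_k^{n+1}|+\eps,|x_k^n|+\eps)$; the differentiability of $\phi_k$ on $(0,\infty)$ only ensures that the supergradient $s_k^n$ is unambiguously defined when its argument is positive.

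There is no real obstacle: the only delicate point is avoiding an explicit choice of subgradient for $h$, which is handled cleanly by picking $-\nabla g(x^{n+1})$ from the optimality inclusion rather than a particular element of $\partial|x_k^{n+1}|$. Once the subgradient inequality for $h$ is combined with the exact quadratic expansion for $g$, the rest is algebraic bookkeeping, and recognizing the remainder as a sum of Bregman distances for $-\phi_k$ is immediate via Theorem~\ref{thm:fenchel} and the definition of $\hat\partial\phi_k$.
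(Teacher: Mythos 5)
Your proof is correct and follows essentially the same route as the paper's: both rest on the optimality condition of the convex subproblem \eqref{reweighted_conv_gen}, the exact quadratic expansion of $\tfrac12\norm{Ax-y}^2$, the subgradient inequality for the weighted $\ell^1$ term (which the paper writes componentwise via $\xi_k^{n+1}x_k^{n+1}=|x_k^{n+1}|$ and $\xi_k^{n+1}x_k^{n}\leq|x_k^{n}|$, and you package as $-\nabla g(x^{n+1})\in\partial h(x^{n+1})$), and the identification of the remainder as the Bregman distances $D_{-\phi_k}$. The only cosmetic difference is that you establish descent for the surrogate first and then add back the $\phi_k$ terms, while the paper expands $F(x^n)-F(x^{n+1})$ directly; no gap either way.
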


\begin{proof}
  Due to the optimality condition for $x^{n+1}$,  there exists $\xi_k^{n+1}\in \partial (|\cdot|)(x^{n+1}_k)$ such that
\begin{equation}\label{optim}
(A^*(Ax^{n+1}-y))_k+\alpha s_{k}^{n}\xi_k^{n+1}=0,\,\, 1\leq k\leq N,
\end{equation}
for some $s_{k}^{n}\in\hat\partial\phi_{k}(\abs{x_{k}^{n}}+\eps)$.
Based on this, one has
\begin{eqnarray}\label{long}
F(x^n)-F(x^{n+1}) &=& \frac{1}{2}\|Ax^n-Ax^{n+1}\|^2+\langle Ax^n-Ax^{n+1},Ax^{n+1}-y\rangle\\
&+& \alpha\sum_{k=1}^N(\phi_k(|x_k^n|+\eps)-\phi_k(|x_k^{n+1}|+\eps))\nonumber\\
&=&\frac{1}{2} \|Ax^n-Ax^{n+1}\|^2+\alpha\sum_{k=1}^N (x_k^{n+1}-x_k^n)s_{k}^{n}\xi_k^{n+1}\nonumber\\
&+& \alpha\sum_{k=1}^N(\varphi_k(|x_k^n|+\eps)-\varphi_k(|x_k^{n+1}|+\eps))\nonumber\\
&\geq & \frac{1}{2}\|Ax^n-Ax^{n+1}\|^2+\alpha\sum_{k=1}^N (|x_k^{n+1}|-|x_k^n|)s_{k}^{n}\nonumber\\
&+& \alpha\sum_{k=1}^N(\varphi_k(|x_k^n|+\eps)-\varphi_k(|x_k^{n+1}|+\eps))\nonumber\\
&=&  \frac{1}{2}\|Ax^n-Ax^{n+1}\|^2+\alpha\sum_{k=1}^N D_{-\varphi_k} (|x_k^{n+1}|+\eps,|x_k^n|+\eps).\nonumber
\end{eqnarray}
Note that the last but one inequality followed from properties of subgradients of $|\cdot|$:  $\xi_k^{n+1}x_k^n\leq |x_k^n|$ and $\xi_k^{n+1}x_k^{n+1}=|x_k^{n+1}|$.
\end{proof}



The next result shows convergence properties of the sequences generated by \eqref{reweighted_conv_gen}-\eqref{eps}. We recall first a notion that will be used in the analysis, namely sequential consistency of functions. A convex function $g$  defined on a Banach spaces  $X$ is called sequentially consistent on its domain if for any two sequences $(x^k)$ and $(y^k)$ such that the first is bounded, one has
\begin{equation}\label{consistent}
D_g(y^k,x^k)\to 0\quad\Rightarrow \|x^k-y^k\|\to 0,\quad\mbox{as}\,\,k\to\infty.
\end{equation}
There are several equivalent definitions for sequential consistency - see, e.g. Theorem 2.10 in \cite{but-res}. Here we will take advantage of  the  following: 

Let $g:\R^N\to(-\infty,\infty]$ be convex,  lower semicontinuous, Fr\'{e}chet differentiable
on its domain, such that  its derivative is uniformly continuous on bounded sets. Then $g$ is sequentially consistent.

We also recall   the notion of stationary point for a locally Lipschitz functional. According to \cite[Chapter 2]{clarke}, if $f:X\to\R$ is a locally Lipschitz function defined on a Banach space $X$, the generalized subdifferential of $f$ at $x\in X$ in the sense of Clarke is defined by
$$\partial f(x)=\{\xi\in X^*: f^\circ(x,v)\geq \langle\xi,v\rangle,\,\forall v\in X\},$$
where $f^\circ(x,v)$ is the Clarke directional derivative
$$f^\circ(x,v)=\limsup_{\substack{y\to x\\ t\searrow 0}}\frac{f(y+tv)-f(y)}{t}\,.$$
A point $x\in X$ is called  a stationary point of $f$, if $0\in\partial f(x).$ As expected, any local minimum point of $f$ is also a stationary point of the function.

In the remaining part of this section, we assume that the functions $t\to \phi_k(t+\eps)$  are  Lipschitz  for any $\eps>0$ (i.e., $\phi_k$ are uniformly Lipschitz on sets away from zero).
Consequently, the mapping $\displaystyle \tilde \phi(x)= \sum_{k=1}^N\phi_{k}(\abs{x_{k}}+\eps)$ is also Lipschitz. Since the least squares term of $F$ is continuously differentiable and the regularizer is Lipschitz, it follows that the Clarke subdifferential of $F$ is additive, that is $\partial F(x)=A^*(Ax-y)+\partial \tilde \phi(x)$, where the latter Clarke sudifferential  is given by $\partial\tilde \phi(x)=((\phi_k'(|x_k|+\eps)\xi_k)_k$ with $\xi_k\in \partial (|\cdot|)(x_k)$.



\begin{proposition} Let $\varepsilon>0$. Let the functions $\phi_k$ be in $CI$, coercive and continuously differentiable. Assume that the functions $-\phi_k$ are sequentially consistent on domains that are bounded and away from zero. Then each sequence $(x^n)$ defined by \eqref{reweighted_conv_gen} is bounded and verifies $\displaystyle{\lim_{n\to\infty}(x^{n+1}-x^n)=0}$. Moreover, each accumulation point of $(x^n)$ is a stationary point of the function $F$.
\end{proposition}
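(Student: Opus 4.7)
The plan rests on the descent inequality \eqref{decrease} from Proposition~\ref{objective_mon}. First I would check coercivity of $F$: in finite dimension, $\|x\|\to\infty$ forces $|x_k|\to\infty$ for some $k$, and since $\phi_k$ is coercive and nondecreasing while the remaining summands are bounded below by $\phi_j(\varepsilon)$, one has $\sum_j\phi_j(|x_j|+\varepsilon)\to\infty$. Consequently $(F(x^n))$ is nonincreasing and bounded below, hence convergent, and $(x^n)$ stays in the bounded sublevel set $\{F\leq F(x^0)\}$. Telescoping \eqref{decrease} then yields $\sum_n\|A(x^{n+1}-x^n)\|^2<\infty$ and $\sum_n D(x^{n+1},x^n)<\infty$, so in particular $D_{-\phi_k}(|x_k^{n+1}|+\varepsilon,|x_k^n|+\varepsilon)\to 0$ for each $k$.

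The next step is to prove $x^{n+1}-x^n\to 0$. Since $|x_k^n|+\varepsilon\in[\varepsilon,M]$ for some $M>0$ and $\varepsilon>0$, sequential consistency of $-\phi_k$ on this bounded region away from zero gives $||x_k^{n+1}|-|x_k^n||\to 0$ componentwise. To upgrade this to signed convergence, I would revisit the proof of Proposition~\ref{objective_mon}: on indices $k$ where $x_k^n$ and $x_k^{n+1}$ carry opposite nonzero signs, the bound $\xi_k^{n+1}x_k^n\leq|x_k^n|$ has slack exactly $2|x_k^n|$, producing an extra nonnegative term $2\alpha|x_k^n|s_k^n$ on the right-hand side of \eqref{decrease}. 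Telescoping then gives $\sum_{n,\,k\text{ sign-flipped at }n}|x_k^n|s_k^n<\infty$. Because each $\phi_k$ is concave, coercive and $C^1$, its derivative is strictly positive everywhere (otherwise $\phi_k$ would be eventually constant), so $s_k^n=\phi'_k(|x_k^n|+\varepsilon)\geq\min_{t\in[\varepsilon,M+\varepsilon]}\phi'_k(t)>0$ uniformly in $n$. A short contradiction argument then rules out sign flips with $|x_k^n|$ bounded away from zero, and combined with $||x_k^{n+1}|-|x_k^n||\to 0$ this gives $x^{n+1}-x^n\to 0$.

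For the final assertion, let $x^{n_j}\to\bar x$; by the previous step also $x^{n_j+1}\to\bar x$. Passing to the limit in the optimality relation for \eqref{reweighted_conv_gen}, namely $A^*(Ax^{n_j+1}-y)+\alpha(\phi'_k(|x_k^{n_j}|+\varepsilon)\xi_k^{n_j+1})_k=0$ with $\xi_k^{n_j+1}\in\partial|\cdot|(x_k^{n_j+1})$, the weights converge by continuity of $\phi'_k$, and the sequence $(\xi^{n_j+1})\subset[-1,1]^N$ admits a further subsequence converging to some $\bar\xi$, with $\bar\xi_k\in\partial|\cdot|(\bar x_k)$ by closedness of the graph of $\partial|\cdot|$. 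The limit equation $0=A^*(A\bar x-y)+\alpha(\phi'_k(|\bar x_k|+\varepsilon)\bar\xi_k)_k$ is then precisely $0\in\partial F(\bar x)$ in the Clarke sense, using the additivity of $\partial F$ recalled just before the proposition. The main obstacle is the middle step: promoting convergence of absolute values to convergence of signed iterates requires going beyond the statement of Proposition~\ref{objective_mon} and exploiting the finer inequality hidden in its proof.
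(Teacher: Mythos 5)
Your proposal is correct and follows essentially the same route as the paper's proof: descent plus sequential consistency of $-\phi_k$ gives $\bigl||x_k^{n+1}|-|x_k^n|\bigr|\to 0$, the slack in the subgradient inequality $\xi_k^{n+1}x_k^n\leq |x_k^n|$ is exactly what rules out persistent sign flips, and stationarity follows by passing to the limit in the optimality condition using closedness of the graph of $\partial(|\cdot|)$ and continuity of $\phi_k'$. The only cosmetic differences are that the paper extracts the sign information from the pointwise limit $(x_k^{n+1}-x_k^n)\xi_k^{n+1}\to 0$ (then splitting into the cases $x_k^{n+1}\neq 0$ eventually versus $x_k^{n+1}=0$ along a subsequence) rather than from a telescoped slack term, and derives boundedness of $(x^n)$ from an (A2)-type bound $\phi_k(t)\geq ct/(1+t)$ rather than from coercivity of $F$; your explicit uniform lower bound on $s_k^n$ via positivity of $\phi_k'$ on compact sets away from zero makes precise a step the paper leaves implicit.
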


\begin{proof} The sequence $(F(x^n))$ is convergent, as it is decreasing due to \eqref{decrease}, and is bounded from below. Thus, it 
 yields
\begin{equation}\label{zero}
\lim_{n\to\infty} D(x^{n+1},x^n)=0,\quad  \lim_{n\to\infty}\|Ax^n-Ax^{n+1}\|=0.
\end{equation}
Consequently,
 \begin{equation}\label{zero_i}
\lim_{n\to\infty} D_{-\phi_k} (|x_k^{n+1}|+\eps,|x_k^n|+\eps)=0,\quad \forall k\in\NN,
\end{equation}
which yields 
\begin{equation}\label{limit}
\displaystyle{\lim_{n\to\infty}\left(|x_k^{n+1}|-|x_k^n|\right)=0}
\end{equation}
 for each $k$, due to the sequential consistency of $-\phi_k$ for any $k$ on sets that are bounded and away from zero. Here we used  the boundedness of $(x^n)$  (that is implied by the boundedness of $(F(x^n))$) and an argument as in the proof of Lemma \ref{inclusion},
 \[
\frac{c}{1+L}\|x^n\|_1\leq \sum_{k=1}^N \frac{c|x_k^n|}{1+|x_k^n|}\leq c\sum_{k=1}^N \frac{|x_k^n|+\eps}{1+|x_k^n|+\eps}\leq \sum_{k=1}^N  \phi_k(|x_k^n|+\eps)\leq F(x^n).
\]
Now \eqref{limit} and the Lipschitz continuity of $\displaystyle x\mapsto \sum_{k=1}^N\phi_k(|x_k|+\eps)$  combined with the second equality in \eqref{long} lead to
\begin{equation}\label{interm}
\lim_{n\to\infty}\left(x_k^{n+1}-x_k^n\right)\xi_k^{n+1}=0
\end{equation}
 for each $k$.

Fix $k\in\{1,\dots,N\}$. One distinguishes two cases. If $x_k^{n+1}\neq 0$ except for finitely many $n\in\NN$, then $|\xi_k^{n+1}|=1$, which due to \eqref{interm} implies that 
\begin{equation}\label{interm1}
\lim_{n\to\infty}\left(x_k^{n+1}-x_k^n\right)=0.
\end{equation}
In the other case,  there is a subsequence $(x_k^{n_j+1})$ which is constant zero. Since

$\displaystyle{\lim_{n\to\infty}\left(|x_k^{n+1}|-|x_k^n|\right)=0}$, one has $x_k^{n_j}\to 0$  and hence, $\displaystyle{\lim_{j\to\infty}(x_k^{n_j+1}-x_k^{n_j})=0}$.

Let $(x^{n_j})$ be a subsequence which converges to $\bar x$ and fix  $k\in\{1,\dots,N\}$. By keeping the same notation on a subsequence of the bounded sequence $(\xi^{n_j}_k)$, the closeness of the graph of the subdifferential of $|\cdot|$ (see, e.g. Theorem 24.4 in \cite{rockafellar70})  yields existence of $\xi_k\in\partial (|\cdot|)(\bar x_k)$, such that $\xi^{n_j}_k\to\xi_k$ when $j\to\infty$. Due to \eqref{optim}, it follows that 
\begin{equation}\label{optim1}
(A^*(Ax^{n_j}-y))_k+\alpha \phi_k'(|x_k^{n_j-1}-x_k^{n_j}+x_k^{n_j}|+\eps)\xi_k^{n_j}=0,\,1\leq k\leq N.
\end{equation}
By taking limit for $j\to\infty$ in \eqref{optim1} and using the continuity of $\phi'_k$ imply
that $\bar x$ is a {stationary point of $F$}.
\end{proof}

\begin{remark} The functions from Remark \ref{examples}  satisfy the sequential consistency property on sets that are bounded and away from zero, since the component functions are  twice differentiable with bounded second derivatives, and thus, their derivatives are Lipschitz on sets that are bounded and away from zero. 

\end{remark}

\section{Conclusions}
We proposed a  general nonconvex approach to reconstruct sparse solutions of ill-posed problems with the aim of enhancing more flexibility than in the classical regularization penalties in $\ell^p$ spaces with $p \in (0,\infty)$. Our analysis touched  both theoretical and 
numerical aspects. From a theoretical point of view,  we studied the convergence of the regularization method and  the convergence properties of a couple of majorization techniques, leading 
to monotone $\ell^2$-minimization and $\ell^1$-minimization schemes.
Finally, we  showed convergence of an algorithm based on $\ell^2$-minimization and tested it in two situations, first for an academic example where the operator is an M matrix, and secondly for a
time-dependent optimal control problem. As shown by numerical results, the procedure is  efficient and accurate, highlighting  the advantages of using
variable penalties over a fixed penalty.

Further work will concern a different numerical approach, that is, a primal dual active set method for the nonconvex problem. This method is inspired by the one proposed in \cite{GK1} to study nonconvex problems with an $\ell^p$ penalty, $p \in (0,1]$.  From a theoretical point of view, the primal dual active set method is interesting in itself since it identifies global minimizers (see Remark 5 of \cite{GK1}). Moreover, the method seems to be more numerically effective than the monotone algorithm in some situations. Last but not least, learning the flexible regularizers  is a challenging open problem. 

\section{Acknowledgments}
The authors are grateful to Robert Csetnek (University of Vienna) for the discussions on generalized subdifferentials, and to the referee for the interesting comments and suggestions. The work of D.L. has been supported by the ITN-ETN project TraDE-OPT funded by the European Union’s Horizon 2020 research and innovation programme under the Marie Skłodowska-Curie grant agreement No 861137. This work represents only the author’s view and the European Commission is not responsible for any use that may be made of the information it contains. Furthermore, D.L. acknowledges funding from BMBF under grant 05M20MBB and from the DFG under grants LO 1436/9-1 and FOR 3022/1. 
 
\bibliographystyle{plain}
\bibliography{references}

\end{document}